\newcommand{\R}{\mathbb{R}}
\newfont{\footsc}{cmcsc10 at 8truept}
\newfont{\footbf}{cmbx10 at 8truept}
\newfont{\bigrm}{cmr12 scaled\magstep4}
\newfont{\footrm}{cmr10 at 10truept}
\definecolor{DarkGreen}{rgb}{0,0.7,0}
\newcounter{foo-theorem}
\newcounter{foo-corollary}
\newcounter{foo-proposition}
\newcounter{foo-lemma}
\newcounter{foo-conjecture}
\newfont{\blb}{msbm10 at 11truept}
\newfont{\comp}{cmr12 scaled\magstep1}
\newfont{\compb}{cmr10 scaled\magstep2}
\newtheorem{theorem}[foo-theorem]{Theorem}
\newtheorem{lemma}[foo-lemma]{Lemma}
\newtheorem{proposition}[foo-proposition]{Proposition}
\newenvironment{definition}{{\bf Definition.}}{}
\newenvironment{proof}{{\bf Proof.}}{\hfill{ }\vrule height10pt width5pt depth1pt \medskip}
\date{}
\begin{document}

\author{Benny Sudakov \thanks{
Department of Mathematics, UCLA, Los Angeles, CA 90095. Email: bsudakov@math.ucla.edu. Research
supported in part by NSF CAREER award DMS-0812005 and by a USA-Israeli BSF grant.}
\and Jacques Verstra\"{e}te
\thanks{Department of Mathematics, University of
California, La Jolla, CA, 92093.
E-mail: jverstraete@ucsd.edu. Research supported in part by NSF Grant DMS-0800704 and an Alfred P. Sloan Research Fellowship.}}

\title{Cycles in sparse graphs II}

\maketitle

\vspace{-0.3in}

\begin{abstract}
The {\em independence ratio} of a
graph $G$ is defined by
\[ \iota(G) := \sup_{X \subset V(G)} \frac{|X|}{\alpha(X)},\]
where $\alpha(X)$ is the independence number of the subgraph of $G$ induced by $X$. The independence ratio is a relaxation of the chromatic number $\chi(G)$
in the sense that $\chi(G) \geq \iota(G)$ for every graph $G$, while for many natural classes of graphs
these quantities are almost equal. In this paper, we address two old conjectures of Erd\H{o}s on cycles in graphs with large chromatic number
and a conjecture of Erd\H{o}s and Hajnal on graphs with infinite chromatic number.
\end{abstract}

\bigskip

\section{Introduction}

Let $G$ be a graph and let $\alpha(X)$ be the size of a
largest independent set in the subgraph of $G$ induced by $X$.
The {\em independence ratio} of a graph $G$ is defined by
\[ \iota(G) := \sup_{X\subset V(G)} \frac{|X|}{\alpha(X)}.\]
The independence ratio of a graph $G$ is a relaxation of the chromatic number $\chi(G)$, since
$\chi(G) \geq \iota(G)$ for all graphs $G$. For many interesting classes of graphs, including
random and pseudorandom graphs, the chromatic number and independence ratio
are equal or almost equal. On the other hand, so-called {\em Kneser graphs}
are examples of graphs on $n$ vertices with constant independent ratio and chromatic number of order $\log n$~\cite{L}.
In this paper, we are motivated by three conjectures on cycles in graphs with
large chromatic number. We give partial evidence for the truth of each conjecture by considering
graphs with large independence ratio.

\subsection{Erd\H{o}s' conjecture on many cycles}

Erd\H{o}s~\cite{Er1} conjectured that a triangle-free graph with chromatic number $k$ contains cycles of
at least $k^{2 - o(1)}$ different lengths as $k \rightarrow \infty$.
The conjecture of Erd\H{o}s remains open, and in fact no lower bound better than linear in $k$ is known for the longest cycle in a
triangle-free graph with chromatic number $k$. In general, Theorem 2 in~\cite{SV} shows that a graph of chromatic number $k$ and no cycle of length $g$ contains cycles of $\Omega(k^{\lceil (g - 1)/2 \rceil})$ different lengths. In this paper, we prove the following theorem which shows in a very strong sense that Erd\H{o}s' conjecture is true for graphs with large independence ratio:

\begin{theorem}\label{long}
Every triangle-free graph with independence ratio at least $k \geq 3$
has cycles of $\Omega(k^2 \log k)$ consecutive lengths.
\end{theorem}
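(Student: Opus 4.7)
The plan is a two-step reduction: first, pass from the independence-ratio hypothesis to a triangle-free subgraph of large minimum degree, and second, invoke the main technical statement that triangle-free graphs of large minimum degree have cycles of many consecutive lengths. For Step~1, let $X \subseteq V(G)$ be a set attaining $|X|/\alpha(G[X]) \geq k$, and put $H := G[X]$, still triangle-free. By the Shearer / Ajtai--Koml\'os--Szemer\'edi bound, a triangle-free graph on $n$ vertices with average degree $d$ has $\alpha \geq c n \log d / d$, so $n/\alpha \geq k$ forces $\bar d(H) = \Omega(k \log k)$. An iterative deletion of low-degree vertices then yields a triangle-free subgraph $H' \subseteq H$ with $\delta(H') \geq c' k \log k$. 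This is the only place triangle-freeness is used to gain the logarithmic factor over a naive degree extraction.

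\emph{Step 2: consecutive cycles from high minimum degree.} The core of the proof is to establish that every triangle-free graph of minimum degree $\delta$ contains cycles of $\Omega(\delta^2/\log \delta)$ consecutive lengths; plugging in $\delta = \Omega(k \log k)$ then gives $\Omega(k^2 \log k)$ consecutive lengths, matching the claimed bound. To prove this, I would take a BFS tree $T$ of $H'$ rooted at a suitable vertex $v_0$; triangle-freeness implies that every non-tree edge joins vertices in the same BFS level or in consecutive levels $L_i, L_{i+1}$, and that each neighbourhood $N(v)$ is an independent set of size $\geq \delta$. Between two carefully chosen adjacent levels $L_i, L_{i+1}$ one then uses a K\H{o}v\'ari--S\'os--Tur\'an / pigeonhole argument to extract two vertices $u, u'$ joined by a family of internally disjoint paths whose lengths cover an entire interval of size $\Omega(\delta^2 / \log \delta)$; closing these paths through a fixed external $u$--$u'$ connector converts them into cycles of the required consecutive lengths.

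\emph{Main obstacle.} The technically delicate point is Step~2: obtaining \emph{consecutive} rather than merely distinct lengths. A longest-path-plus-chord argument yields $\Omega(\delta)$ distinct cycle lengths essentially for free, but to reach a window of size $\Omega(\delta^2/\log \delta)$ one must use triangle-freeness in a genuinely quantitative way — essentially to argue that the bipartite graph of non-tree edges between two well-chosen BFS levels is dense enough to host a rich family of paths hitting every integer length in the target window. Controlling how these local path constructions cooperate globally, and absorbing exactly one logarithmic factor of loss, is where the bulk of the technical work is expected to lie.
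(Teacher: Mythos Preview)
Your Step~2 claim is false as stated, and this is a genuine gap, not a technicality. The complete bipartite graph $K_{\delta,\delta}$ is triangle-free with minimum degree $\delta$ but has only $2\delta$ vertices, so its longest cycle has length $2\delta$ and it certainly does not contain cycles of $\Omega(\delta^2/\log\delta)$ consecutive lengths. The point is that by passing in Step~1 to a subgraph of large minimum degree you discard the independence-ratio information: the subgraph $H'$ you extract may well have $\alpha(H')$ close to $|V(H')|/2$, and then nothing prevents it from being small and bipartite. The logarithmic gain you obtain from Shearer in Step~1 is correct, but minimum degree alone is far too weak a hypothesis to support Step~2.

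The paper does not reduce to minimum degree. Instead it passes (via a greedy deletion argument) to an induced subgraph $H$ that is \emph{$(18k+3)$-expanding on independent sets}: every independent set $I$ has $|\partial I| > (18k+3)|I|$. This is the property that survives the reduction and does the work that minimum degree cannot. One then runs BFS from a vertex $v$ and argues a dichotomy on the levels $N_i(v)$: either some level has $\iota(N_i) > 6k+1$, or every level has a large independent set, in which case the expansion hypothesis forces $|N_{i+1}| > |N_i|$ for all $i$, impossible in a finite graph. In the first case one applies a separate proposition showing that a triangle-free graph with $\alpha < n/(3k'+1)$ contains a \emph{non-bipartite} subgraph consisting of a cycle of length $\Omega(k'^2\log k')$ together with a chord; this is where Shearer is used, combined with P\'osa rotation to get a long cycle and a 2-connectivity argument to force odd parity and a chord. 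Finally, a short combinatorial lemma says that a non-bipartite cycle-with-chord $F$ sitting inside a single BFS level yields, for every $\ell \le |V(F)|-1$, a path of length $\ell$ between the two sides of any bipartition of $V(F)$; routing these through the BFS tree gives the consecutive cycle lengths. This last lemma, not a K\H{o}v\'ari--S\'os--Tur\'an argument between adjacent levels, is the mechanism that produces \emph{consecutive} lengths.
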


We prove this theorem in Section 2.5, after some preliminary results in Sections 2.1 -- 2.4.
The important result by Kim~\cite{Kim} establishing the order of magnitude of triangle-complete graph Ramsey numbers $r(3,t) = \Theta(t^2/\log t)$ shows that there are triangle-free graphs with independence ratio $k$ and with $O(k^2 \log k)$ vertices, so the above
result is best possible up to the value of the implicit constant.

\subsection{Hereditary Properties}

Theorem \ref{long} is part of a more general theorem on {\em hereditary properties} -- families of graphs closed under taking induced subgraphs. To describe the general theorem, let $P$ be a hereditary
property and let $f : [1,\infty) \rightarrow [1,\infty)$ be an increasing bijection. Then we say that $P$ has {\em speed at most $f$} if for every $n \in \mathbb N$ and every $n$-vertex graph
$G \in P$, we have $\iota(G) \leq f(n)$. Since the identity function $f(x) = x$ for $x \in [1,\infty)$ serves as an upper bound for the speed of every hereditary property, the speed of each hereditary
property is well-defined. We shall prove the following theorem:

\begin{theorem}\label{general}
Let $f : [1,\infty) \rightarrow [1,\infty)$ be an increasing bijection. If $P$ is a hereditary property with speed at most $f$, then any graph $G \in P$ with $\iota(G) > 18k + 4$ has cycles of at least
$\frac{1}{2}f^{-1}(k)$ consecutive lengths.
\end{theorem}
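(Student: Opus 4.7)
\medskip

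\noindent \textbf{Proof plan.} The plan is to pass from $G$ to a ``dense core'' subgraph $H \subseteq G$ on which a structural cycle-lengths lemma (established earlier in Sections 2.1--2.4) can be applied, and to use the hereditary speed hypothesis only at the last step to convert density into size. First I pick $X \subseteq V(G)$ witnessing $\iota(G) > 18k+4$, so that $\alpha(G[X]) < |X|/(18k+4)$.

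Next, I apply degeneracy peeling to $W := G[X]$: iteratively delete a vertex of current degree at most $9k+1$, stopping at an induced subgraph $H \subseteq W$ with $\delta(H) \geq 9k+2$. Let $t$ be the number of deleted vertices, processed in the reverse peeling order. Each deleted vertex has at most $9k+1$ neighbors ``ahead'' of it among the remaining deleted vertices, so a standard greedy argument yields an independent set in $W$ of size at least $t/(9k+2)$. Combined with $\alpha(W) < |X|/(18k+4)$ this gives
\[t \;<\; \frac{(9k+2)\,|X|}{18k+4} \;=\; \frac{|X|}{2}, \qquad\text{hence}\qquad |V(H)| \;>\; \frac{|X|}{2}.\]
Since $\alpha(H) \leq \alpha(W) < |X|/(18k+4)$, I deduce
\[\iota(H) \;\geq\; \frac{|V(H)|}{\alpha(H)} \;>\; \frac{|X|/2}{|X|/(18k+4)} \;=\; 9k+2 \;>\; k.\]
The particular constants $18k+4$ and $9k+2$ are chosen precisely so that peeling simultaneously preserves ``at least half the vertices'' and ``independence ratio bigger than $k$''.

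Because $P$ is hereditary and $H$ is induced in $G \in P$, we have $H \in P$, and the speed bound then yields $k < \iota(H) \leq f(|V(H)|)$, i.e.\ $|V(H)| > f^{-1}(k)$. At this stage I invoke the structural lemma from Sections~2.1--2.4, which I expect to assert that any graph with minimum degree at least a modest multiple of $k$ contains cycles of at least $\tfrac12 |V(H)|$ consecutive lengths. Applied to $H$ (which satisfies $\delta(H) \geq 9k+2$ and $|V(H)| > f^{-1}(k)$), this delivers cycles of at least $\tfrac12 f^{-1}(k)$ consecutive lengths in $H$, and therefore in $G$.

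The main obstacle is clearly that last ingredient: extracting $\Omega(|V(H)|)$ consecutive cycle lengths from the minimum-degree condition alone. This is the genuine technical content of the paper, presumably carried out via a BFS-tree / P\'osa-rotation analysis in the earlier subsections, and is used here as a black box. By contrast, the reduction described above---degeneracy peeling combined with the hereditary speed bound---is short and essentially routine once the cycle-length lemma is in hand.
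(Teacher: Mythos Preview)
Your reduction has a genuine gap: the ``structural lemma'' you invoke at the end --- that a graph $H$ with minimum degree a constant times $k$ contains cycles of at least $\tfrac12|V(H)|$ consecutive lengths --- is false, and is not what Sections~2.1--2.4 establish. A disjoint union of copies of $K_{9k+3}$ has $\delta=9k+2$ and $\iota=9k+3$ but no cycle longer than $9k+3$, while $|V(H)|$ can be arbitrarily large. More to the point, from minimum degree $d$ alone one can guarantee only $\Theta(d)$ consecutive cycle lengths, not $\Theta(|V(H)|)$; so your black box would only yield $O(k)$ consecutive lengths, whereas the theorem demands $\tfrac12 f^{-1}(k)$, which for e.g.\ the triangle-free property is of order $k^2$.

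The paper's argument is structurally different, and the hereditary speed bound is used in a different place. One first passes (via Lemma~\ref{indep-expand}) to a subgraph $H$ that is $(18k+3)$-expanding on independent sets, then runs BFS from a vertex $v$. A growth argument forces some level $N_i(v)$ to have $\iota(N_i)>6k+1$. The speed bound is applied \emph{inside that level}: since $G[N_i]\in P$ and has small independence number, Lemma~\ref{monotone} converts $6k$-expansion on independent sets into weak $2$-expansion on \emph{all} sets of size up to $f^{-1}(k)$ (this is precisely where $f$ enters --- a large independent set in a small $X$ forces $|\partial X|$ to be large). P\'osa rotation then yields a cycle of length $\Omega(f^{-1}(k))$, and Proposition~\ref{theta} upgrades this to a non-bipartite cycle-with-chord of that length inside $N_i$. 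Finally Proposition~\ref{consecutive} turns a long non-bipartite cycle-with-chord sitting in a single BFS level into the required run of consecutive cycle lengths. So the speed bound is not a last-step size conversion as in your outline; it is the mechanism that makes the expansion strong enough for P\'osa's lemma to produce a cycle of length $f^{-1}(k)$ rather than merely $k$.
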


This theorem is proved in Section 2.6. Theorem \ref{general} applies in general to the
property $P_H$ of $H$-free graphs -- this is the hereditary property of graphs which do not contain any copy of $H$.
For example, one can obtain an appropriate generalization of Theorem \ref{long}.

\begin{theorem}\label{kskt}
Let $G$ be a $K_{s+1}$-free graph and suppose $\iota(G) > 18k + 4$. Then $G$ contains cycles of
at least $\frac{1}{2}(k/s)^{s/(s - 1)}$ consecutive lengths.
\end{theorem}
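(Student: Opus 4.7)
The plan is to apply Theorem \ref{general} to the hereditary property $P_{K_{s+1}}$ of $K_{s+1}$-free graphs; the only real work is to control the speed of this property. I want an increasing bijection $f : [1,\infty) \to [1,\infty)$ with $\iota(G) \leq f(n)$ for every $K_{s+1}$-free graph $G$ on $n$ vertices, and with $f^{-1}(k) \geq (k/s)^{s/(s-1)}$.

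First I would establish the auxiliary claim that every $K_{s+1}$-free graph $H$ on $m$ vertices satisfies $\alpha(H) \geq m^{1/s}/s$. This is by induction on $s$. The base $s=1$ is trivial since $H$ is edgeless. For the inductive step, let $\Delta$ be the maximum degree of $H$. If $\Delta \geq m^{(s-1)/s}$, then the neighborhood of a maximum-degree vertex induces a $K_s$-free graph on at least $m^{(s-1)/s}$ vertices (else adjoining that vertex yields a $K_{s+1}$), and the inductive hypothesis produces an independent set of size at least $m^{1/s}/(s-1) \geq m^{1/s}/s$. Otherwise every vertex has degree less than $m^{(s-1)/s}$, and a greedy independent set has size at least $m/(m^{(s-1)/s}+1) \geq m^{1/s}/2 \geq m^{1/s}/s$ for $s \geq 2$.

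Since every induced subgraph of a $K_{s+1}$-free graph is again $K_{s+1}$-free, applying the claim to each $X \subset V(G)$ gives $|X|/\alpha(X) \leq s|X|^{(s-1)/s} \leq sn^{(s-1)/s}$, so $\iota(G) \leq sn^{(s-1)/s}$. Hence the speed of $P_{K_{s+1}}$ is at most $f(n) = sn^{(s-1)/s}$, up to a trivial adjustment at small $n$ to turn $f$ into an increasing bijection of $[1,\infty)$. Inverting yields $f^{-1}(k) = (k/s)^{s/(s-1)}$, so Theorem \ref{general} immediately produces $\frac{1}{2}(k/s)^{s/(s-1)}$ consecutive cycle lengths as soon as $\iota(G) > 18k+4$.

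The main obstacle is really the independence-number lemma; once it is in place the theorem follows by a mechanical inversion and an appeal to Theorem \ref{general}. The factor $1/s$ in the lemma (rather than the sharper $1$ available from Ramsey-type bounds) is precisely what produces the $k/s$ inside the exponent $s/(s-1)$ in the final count, and it is the quantity one should chase to match the statement.
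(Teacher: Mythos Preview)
Your proposal is correct and structurally identical to the paper's proof: bound the speed of the hereditary property $P_{K_{s+1}}$ by $f_s(x)=\min\{x,\,sx^{1-1/s}\}$ (your ``trivial adjustment at small $n$'' is precisely this $\min$, which the paper also uses), invert to get $f_s^{-1}(k)=\max\{k,(k/s)^{s/(s-1)}\}\ge (k/s)^{s/(s-1)}$, and apply Theorem~\ref{general}. The only point of difference is how the speed bound itself is obtained: the paper quotes the classical Ramsey estimate $r(K_{s+1},K_{t+1})\le\binom{s+t}{s}\le (st)^s$ (Proposition~\ref{ramseykskt}) and reads off $\iota(G)<sn^{1-1/s}$, whereas you prove $\alpha(H)\ge m^{1/s}/s$ from scratch by induction on $s$ via a maximum-degree dichotomy. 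Your argument is more self-contained and avoids citing Ramsey numbers; the paper's is shorter by appeal to a standard bound. Either route produces the same speed function, so the final count of consecutive cycle lengths is the same.
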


Clearly this theorem holds also for all $H$-free graphs where $H$ has $s+1$ vertices.
We prove Theorem \ref{kskt} in Section 2.7. One can improve the lower bound $(k/s)^{s/(s - 1)}$ in the above theorem by
a polylogarithmic factor which, for $s = 3$, agrees with Theorem \ref{long}, but
this involves only further computations and so will be omitted.

\subsection{Erd\H{o}s' conjecture on unavoidable cycles}

Erd\H{o}s~\cite{E} offered one thousand dollars for a satisfactory resolution of the
following problem: in a graph of infinite chromatic number, which
cycle lengths should appear? For example, Erd\H{o}s conjectured that a graph of sufficiently
large chromatic number has a cycle of length a prime.
We show that if an $n$-vertex graph has independence ratio at least $3\exp(8\log^*\!n)$, then not only does it contain a cycle of prime length, it contains cycles of lengths from many other sparse infinite sequences of integers. Here $\log^*\!n$ is the number of times the natural logarithm must be applied to $n$ to get a number less than one, and in what follows, we write $\log_b n$ for the logarithm base $b$, and omit the base if the logarithm is the natural logarithm. We prove the following theorem:

\medskip

\begin{theorem}\label{lengths}
Let $\sigma$ be an infinite increasing sequence of positive integers satisfying $\sigma_1 \geq 3$ and $\log \sigma_r \leq \sigma_{r - 1}$ for all $r \geq 2$. If $G$ is an $n$-vertex graph and
\[ \iota(G) \geq \sigma_1 \exp(8\log^*\!n),\]
then $G$ contains a cycle of length in the sequence $\sigma$.
\end{theorem}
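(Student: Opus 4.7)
My plan is to prove the contrapositive by strong induction on $n$: if $G$ is an $n$-vertex graph with no cycle of length in $\sigma$, then $\iota(G) < g(n) := \sigma_1 \exp(8 \log^* n)$. Since $\iota(G) \leq n$ always, this bound is vacuous whenever $g(n) > n$, which handles every small $n$ as a base case.

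For the induction step, suppose the bound holds for all $m < n$, and let $P = \{G : G \text{ has no cycle of length in } \sigma\}$, a hereditary property. Its speed $f$ satisfies $f(m) \leq g(m)$ for every $m < n$. Choose an increasing bijection $\tilde f$ with $\tilde f(m) = g(m)$ for $m \leq n-1$ and $\tilde f(m) = m$ for $m \geq n$; this is a valid speed bound for $P$. Suppose for contradiction that some $G \in P$ on $n$ vertices has $\iota(G) \geq g(n)$. Applying Theorem~\ref{general} with speed $\tilde f$ and parameter $k = \lfloor (\iota(G) - 5)/18 \rfloor \geq g(n)/19$ yields cycles of $L := \tfrac12 \tilde f^{-1}(k)$ consecutive lengths in an interval $[a, a+L-1] \subseteq [3, n]$. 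Because $e^8 > 18$ and $\log^*$ is integer-valued, the inequality $\tilde f(m) \geq k$ forces $\log^* m \geq \log^* n$, giving $\tilde f^{-1}(k) > e_{t-1}$ where $t = \log^* n$ and $e_0 = 1,\ e_i = \exp(e_{i-1})$ denotes the tower of height $i$. Thus $L > \tfrac12 e_{t-1} \geq \tfrac12 \log n$.

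It remains to show that this interval must contain some $\sigma_r$, which contradicts $G \in P$ and closes the induction. If it misses $\sigma$, then either $[a, a+L-1] \subseteq [3, \sigma_1)$, which forces $L < \sigma_1$ and is ruled out for $n$ past a $\sigma_1$-dependent constant (absorbed into the base case), or $[a, a+L-1] \subseteq (\sigma_{r-1}, \sigma_r)$ for some $r \geq 2$, whereupon the growth hypothesis $\log \sigma_r \leq \sigma_{r-1}$ gives $L < e^{\sigma_{r-1}}$ and hence $a > \sigma_{r-1} > \log L$. The main obstacle is exactly this last case: I expect to need a bound on the starting point of the interval of the form $a \leq e_{t-2}$, which would force $\sigma_{r-1} < a \leq e_{t-2}$ and so $L < e^{e_{t-2}} = e_{t-1}$, contradicting $L > \tfrac12 e_{t-1}$. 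Such control on $a$ should come either from the BFS-type construction internal to the proof of Theorem~\ref{general} (short cycles arise near the root of the search tree and so have length bounded in terms of the depth), or by reapplying Theorem~\ref{general} on an induced subgraph of $G$ built from a short cycle produced by the first application, iterating until the inductive hypothesis kicks in at a strictly smaller value of $n$.
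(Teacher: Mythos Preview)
Your plan has the right overall shape, and you have correctly isolated the crux: controlling the starting point $a$ of the interval of consecutive cycle lengths. Unfortunately, neither of your two proposed fixes works as stated, so the argument has a genuine gap.

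For your first fix, Theorem~\ref{general} as a black box gives no control on $a$, and opening up its proof does not help either. In that proof one runs a BFS from a vertex $v$ and stops at the first level $N_i$ with $\iota(N_i) > 6k+1$; the cycles then start at some $a \le 2i+1$. But for the earlier levels the only information is $|N_{j+1}| > |N_j|$, which gives no useful bound on $i$ in terms of $k$ or $f$ (just $i \le n$). So there is no reason the interval cannot begin near $e_{t-1}$, exactly the bad case you need to rule out. Your second fix (``reapply Theorem~\ref{general} on an induced subgraph built from a short cycle'') is not fleshed out enough to evaluate; it is not clear what subgraph you would pass to or why its independence ratio would stay large enough for the induction to bite. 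A minor additional issue: $g(m) = \sigma_1 \exp(8\log^* m)$ is a step function, so you cannot take an increasing bijection $\tilde f$ equal to it; you would need to interpolate, and then the computation of $\tilde f^{-1}(k)$ becomes more delicate than you indicate.

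The paper resolves exactly this obstacle, but by a different inductive structure. Rather than inducting on $n$, it first picks a subsequence $\tau$ of $\sigma$ with $\tau_r$ lying in the $r$th tower interval, and proves by induction on $r$ a quantitative independence bound of the form $\alpha(G) \ge a_r^{-1} m^{1 - \delta_r}$ for every $m$-vertex graph avoiding all $\sigma_j \le \tau_r$, where $\delta_r = 1/\lceil \tau_r/2\rceil$. The point is that the speed bound from level $r-1$ feeds into the BFS at level $r$: instead of the weak inequality $|N_{j+1}| > |N_j|$, one gets geometric growth $h_j > m^{j\delta_r}$, which forces the BFS to terminate at depth $j \le 1/\delta_r \le \tau_r/2$. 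Proposition~\ref{consecutive} then gives $a \le 2j+1 \le \tau_r$, exactly the control you were missing. Summing the resulting bound over $r \le \log^* n$ yields the theorem. In short, the induction has to be on the tower level (equivalently, on which prefix of $\sigma$ is being avoided), not on $n$, because only then does the inductive hypothesis translate into fast enough BFS growth to bound the starting point.
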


\medskip

Theorem \ref{lengths} is proved in Section 3. We claimed that an $n$-vertex graph $G$ with $\iota(G) \geq 3\exp(8\log^*\!n)$ contains a cycle of length a prime. Let $p_r$ denote the $r$th prime number. Then Bertrand's Postulate gives $p_{r + 1} \leq 2p_r$ for all $r \in \mathbb N$, and so $\log p_{r + 1} \leq \log p_r + 1 \leq p_r$ for all $r \in \mathbb N$. Applying Theorem \ref{lengths} to this sequence, we see that a graph $G$ with $\iota(G) > 3\exp(8\log^*\!n)$ contains a cycle of length a prime, as claimed.
Theorem \ref{lengths} gives a similar upper bound for much sparser sequences, such as powers of three, or $2 + 1,2^2 + 1,2^{2^2} + 1,\dots$ and so on.

\bigskip

An important remark is that Theorem \ref{lengths} distinguishes between the independence ratio and the chromatic number: generalizations of Mycielski's well-known construction of triangle-free graphs of
arbitrarily large chromatic number provide constructions for infinitely many $n$ of an $n$-vertex graph $G_n$ of chromatic number $\chi(G_n) = \Omega((\log n)/(\log\log n))$ with no cycle of length in a
prescribed sequence $\sigma$ satisfying $\log \sigma_r \leq \sigma_{r - 1}$ for $r \geq 2$. The conclusion of Theorem \ref{lengths} therefore does not hold if we replace $\iota(G)$ with $\chi(G)$ in the
theorem. We present the details of this construction in Section 5.

\subsection{Erd\H{o}s-Hajnal conjecture}

Let $C(G) = \{\ell : C_{\ell} \subset G\}$ denote the set of cycle lengths in a graph $G$.
Erd\H{o}s~\cite{E} proposed the study of the quantity
\[ L(G) = \sum_{t \in C(G)} \frac{1}{t}\]
and conjectured that in a graph of infinite chromatic number,
$L(G)$ diverges. This conjecture was proved by Gy\'{a}rf\'{a}s, Komlos and Szemer\'{e}di~\cite{GKS}.
Specifically, they proved that if $G$ is a finite graph of minimum degree $d$,
then there exists $\epsilon > 0$ such that
\[ L(G) \geq \epsilon \log d. \]
This is best possible up to the value of the constant $\epsilon$, since the complete bipartite graph has $L(K_{d,d}) \leq \frac{1}{2}\log d + 1$.
It follows that $L(G)$ diverges when $G$ has infinite chromatic number, since a graph of infinite chromatic number contains a graph of minimum degree at least $d$ for each $d \in \mathbb N$.
The result above therefore does not rely on the chromatic number as much as the existence of subgraphs of arbitrarily large average degree.
Erd\H{o}s and Hajnal~\cite{E,EH} conjectured that in a graph with infinite chromatic number, the sum of reciprocals of odd cycle lengths diverges.
If $C_{\circ}(G)$ is the set of lengths of odd cycles in $G$, their conjecture states that if $G$ is a graph of infinite chromatic number, then
\[ L_{\circ}(G) := \sum_{t \in C_{\circ}(G)} \frac{1}{t} = \infty.\]
In this paper, we give some evidence for this conjecture by showing:

\medskip

\begin{theorem}\label{recip}
For any graph $G$ on $n$ vertices,
\[ L_{\circ}(G) \; \geq \;
\frac{1}{2} \log \iota(G) - 8\log^*\!n.\]
\end{theorem}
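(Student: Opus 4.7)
The plan is to iterate the framework developed for Theorem \ref{lengths}, at each stage producing an odd cycle of controlled length and summing the resulting reciprocals. Write $k = \iota(G)$; the inequality is vacuous whenever $\tfrac{1}{2}\log k \leq 8\log^*\!n$, so assume $k$ is suitably large.

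First I would replace $G$ by a subgraph of large minimum degree. From the bound $\iota(G) \leq \chi(G)$ noted in the introduction, together with $\chi(G) \leq d+1$ where $d$ is the degeneracy of $G$, the hypothesis $\iota(G) \geq k$ forces an induced subgraph $H \subseteq G$ with minimum degree at least $k-1$. This places us in the Gy\'arf\'as--Koml\'os--Szemer\'edi regime, but we need to extract odd cycles specifically. A BFS in $H$ from an arbitrary root yields levels $L_0, L_1, \dots, L_D$, and every intra-level edge $uw \in E(H[L_i])$ produces an odd cycle of length $2(i - h(u,w)) + 1$, where $h(u,w)$ is the BFS-tree depth of the lowest common ancestor of $u$ and $w$. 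It is precisely these intra-level edges, as opposed to level-to-level edges, that distinguish the odd-cycle problem from the general cycle-length problem.

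To secure the main term $\tfrac{1}{2}\log k$ I would show that the intra-level edges force odd cycle lengths to cover most of the integers in an interval of the form $[\ell, \ell\cdot k]$, so that the harmonic-type sum $\sum_{t \text{ odd}} 1/t$ over this interval contributes at least $\tfrac{1}{2}\log k$. Equivalently, one can aim for an $\Omega(1)$ reciprocal contribution from each of $\Omega(\log k)$ dyadic scales $[2^j, 2^{j+1}]$. In either formulation, the point is that a positive density of BFS levels must support enough intra-level edges to yield odd cycles at the requisite scale, and here the independence ratio (not merely the minimum degree) is what one would invoke, by applying Theorem \ref{general} inside suitable local pieces of $H$ and recovering odd lengths from the consecutive cycle lengths it produces.

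The additive overhead $8\log^*\!n$ comes from iteration: when the BFS is too shallow, or the intra-level edges concentrate on too narrow a range of lengths, one passes from $G$ to a further induced subgraph with mildly reduced independence ratio and repeats the analysis. Following the bookkeeping from Theorem \ref{lengths}, this terminates after $O(\log^*\!n)$ rounds with constant $8$. The main technical obstacle I anticipate is the parity separation: the machinery producing many consecutive cycle lengths is parity-blind, whereas here odd lengths must be distributed densely across many scales. Controlling the intra-level BFS structure to guarantee this density is the crux of the proof; once it is in hand, the harmonic summation and the $\log^*$ iteration are essentially routine.
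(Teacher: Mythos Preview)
Your outline has a genuine gap at exactly the point you flag yourself: you never explain how to force odd cycle lengths at $\Omega(\log k)$ distinct scales. Theorem~\ref{general} and Proposition~\ref{consecutive} produce a run of consecutive cycle lengths inside a single interval $[2h+1,\,2h+L]$, and such a run contributes only $O(1)$ to $L_\circ(G)$ regardless of where it sits; so invoking that machinery inside each BFS level does not by itself yield the $\tfrac12\log k$ main term. The assertion that ``a positive density of BFS levels must support enough intra-level edges to yield odd cycles at the requisite scale'' is the entire content of the theorem, and nothing in your sketch substantiates it. The reduction to a subgraph of large minimum degree via degeneracy is also a detour: the paper never uses minimum degree, and the Gy\'arf\'as--Koml\'os--Szemer\'edi argument it resembles is parity-blind, which is precisely the difficulty you are trying to overcome.

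The paper's proof sidesteps parity entirely by a short contrapositive. Set $t=\iota(G)$, $s=t/\exp(8\log^*\!n)$, and cover the odd integers by geometric blocks $S_i=[s^i,s^{i+1})\cap(2\mathbb N+1)$. If some $S_i\subset C(G)$, the harmonic sum over that single block already gives $L_\circ(G)\geq\tfrac12\log s=\tfrac12\log t-8\log^*\!n$. Otherwise pick an odd $\sigma_i\in S_i\setminus C(G)$ from every block; this sequence of \emph{missing} cycle lengths grows geometrically, so one can extract a subsequence $\tau$ with $\log\tau_r\leq\tau_{r-1}$ and apply Theorem~\ref{lengths2} to obtain $\iota(G)<\sigma_1\exp(8\log^*\!n)\leq s\exp(8\log^*\!n)=t$, a contradiction. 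No odd cycle is ever constructed directly; once Theorem~\ref{lengths2} is in hand the argument is only a few lines, and the dichotomy ``either a whole block of odd lengths is present, or a forbidden sequence exists'' is the idea your sketch is missing.
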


\medskip

We prove Theorem \ref{recip} in Section 4.
This theorem is best possible up to the $O(\log^*\!n)$ term, in the sense that $L_{\circ}(K_t) \leq \frac{1}{2} \log \iota(K_t) + 1$.
It would be interesting as a first step to the Erd\H{o}s-Hajnal conjecture to show that $L_{\circ}(G)$ diverges when $G$
is a graph with infinite independence ratio.

\section{Preliminary results}

In this section, we present the results necessary for the proofs of Theorems \ref{long}--\ref{recip}.
The following notation will be used. If $G$ is a graph, then $\iota(G)$ is its independence ratio and $\alpha(G)$ is its
independence number. For a set $X \subset V(G)$, we denote by $\partial_G X$ the set of vertices
of $V(G) \backslash X$ adjacent to at least one vertex in $X$. We sometimes omit the subscript $G$ when it is clear which graph is
 being referred to. We write $\alpha(X)$ for the independence number of the subgraph of
$G$ induced by $X$. Fixing a vertex $v \in V(G)$, it is convenient to let $N_i(v)$ denote the set of vertices at
distance exactly $i$ from $v$.

\subsection{Expanding subgraphs}

The starting point for proving Theorems \ref{long} -- \ref{lengths} is to show that graphs with large independence ratio
have nice expansion properties. Precisely, we make the following definitions:

\bigskip

\begin{definition}
We say that a graph $G$ is {\em $k$-expanding on
independent sets} if every independent set $I$ in $G$ has $|\partial_G I| > k|I|$. A graph $G$ is {\em weakly $k$-expanding
on independent sets} if for some $v \in V(G)$, every independent set $I \subset V(G) \backslash \{v\}$ has $|\partial_G I| > k|I|$.
\end{definition}

\bigskip

We notice in particular that if a graph is weakly $k$-expanding on independent sets, then all but at most one vertex in the graph has degree more than $k$.

\begin{lemma}\label{indep-expand}
Let $k \geq 1$. Then every $n$-vertex graph $G$ with $\alpha(G) < n/(k + 1)$ has an induced subgraph that is $k$-expanding on independent sets and
a 2-connected subgraph that is weakly $k$-expanding on independent sets.
\end{lemma}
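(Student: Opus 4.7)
The plan is to prove the two parts in sequence: the first by a minimality argument, and the second by extracting an endblock of the resulting expanding subgraph.

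For the first part, I would let $H$ be a non-empty induced subgraph of $G$ with $\alpha(H) < |V(H)|/(k+1)$ minimizing $|V(H)|$; such an $H$ exists because $G$ itself satisfies the inequality (and the empty graph does not). I claim this $H$ is $k$-expanding on independent sets. Otherwise, there is a non-empty independent set $I \subset V(H)$ with $|\partial_H I| \le k|I|$, and I would pass to $H' := H - (I \cup \partial_H I)$, which has strictly fewer vertices. Any independent set of $H'$ is disjoint from $\partial_H I$ and so can be augmented by $I$ to an independent set of $H$, giving $\alpha(H') \le \alpha(H) - |I|$; combined with $|V(H')| \ge |V(H)| - (k+1)|I|$, the strict inequality $|V(H')| - (k+1)\alpha(H') > 0$ is preserved. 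In particular $H'$ is non-empty, contradicting the minimality of $H$.

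For the second part, applying the expansion to singletons $I=\{v\}$ shows that every vertex of $H$ has degree at least $k+1 \ge 2$. Since the neighbors of an independent set all lie in its own connected component, the $k$-expansion condition is inherited by every connected component, so I may pass to one component and assume $H$ is connected with minimum degree at least $k+1$. Then $H$ has a non-trivial block decomposition, and I would take $B$ to be any endblock (a leaf of the block tree), which contains at most one cut vertex $v$ of $H$. Every other vertex of $B$ belongs to no other block, and so has all of its $H$-neighbors inside $V(B)$. Hence for every independent $I \subset V(B) \setminus \{v\}$ we have $\partial_B I = \partial_H I$, and the $k$-expansion of $H$ yields $|\partial_B I| > k|I|$. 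The endblock $B$ must be 2-connected: a bridge endblock would have both endpoints of degree at least $2$, forcing both to be cut vertices, contradicting the endblock property.

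The main delicacy, rather than difficulty, is the counting in the first part: one needs to check that the simultaneous drop in $\alpha$ and in vertex count preserves both the strict inequality $\alpha < |V|/(k+1)$ and the non-emptiness of the smaller subgraph, for otherwise the minimization is vacuous. The second part is then a purely structural observation about endblocks combined with the lower bound on the minimum degree handed down by the first part.
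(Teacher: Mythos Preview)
Your proof is correct and follows essentially the same approach as the paper: the paper phrases the first part as an iterative removal of bad independent sets $I_0, I_1, \dots$ until the graph is exhausted (yielding a too-large independent set $\bigcup I_j$), which is the unfolded version of your minimal-counterexample argument with the same arithmetic $|V(H')| - (k+1)\alpha(H') \ge |V(H)| - (k+1)\alpha(H)$. For the second part the paper likewise takes an endblock and uses $\partial_F I = \partial_H I$ for $I \subset V(F)\setminus\{v\}$; your extra care in passing to a connected component and ruling out bridge endblocks via the minimum-degree bound just makes explicit what the paper leaves implicit.
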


\begin{proof}
First we show that $G$ has a subgraph $H$ that is $k$-expanding on independent sets. Let $G_0 = G$.
If $G_0$ has no subgraph $H$ as above, then there is an independent set $I_0 \subset V(G_0)$ such that $|\partial I_0| \leq k|I_0|$. Let $G_1 = G - I_0 - \partial I_0$. Then there is an independent set $I_1 \subset V(G_1)$
with $|\partial I_1| \leq k|I_1|$. Let $G_2 = G_1 - I_1 - \partial I_1$.
Continuing in this way, we eventually remove independent sets
$I_0,I_1,\dots,I_r$ and their neighborhoods $\partial I_0,\partial I_1,\dots,\partial I_r$ and this exhausts all the vertices in the graph:
\[ V(G_0) = \bigcup_{j=0}^{r} (I_j \cup \partial I_j).\]
However, the set $I = \bigcup_{j=0}^{r} I_j$ is an independent set of size at least
$n/(k + 1)$ in $G = G_0$, which is a contradiction. Therefore there exists $H \subset G$ that is $k$-expanding on
independent sets. This proves the first statement of the lemma.

\medskip

To prove the second statement, if $H$ is 2-connected, then we are done. If $H$ is not 2-connected, let $F$ be an {\em endblock} of $H$ --
this is a maximal 2-connected subgraph of $H$ containing exactly one cut vertex $v$ of $H$. Then for any independent set $I \subset V(F) \backslash \{v\}$,
\[ |\partial_F I| = |\partial_H I| > k|I|.\]
So $F$ is weakly $k$-expanding on independent sets. This completes the proof.
\end{proof}

\bigskip

Ajtai, Koml\'{o}s and Szemer\'{e}di~\cite{AKS} showed that if $G$ is an $n$-vertex triangle-free graph, then $\alpha(G) = \Omega(\sqrt{n\log n})$. Their result was improved by Shearer~\cite{Sh}, who showed that if $G$ is an $n$-vertex triangle-free graph of maximum degree $d \geq 2$, then
\[ \alpha(G) \geq \frac{n(d\log d - d + 1)}{(d - 1)^2}.\]
A straightforward calculation gives the following result:

\begin{lemma}\label{ramsey-triangles}
For $n \geq e^{15}$, every $n$-vertex triangle-free graph $G$ has
\[ \alpha(G) > \Bigl(\frac{n \log n}{2}\Bigr)^{1/2}.\]
\end{lemma}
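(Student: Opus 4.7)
The plan is to case-split on the maximum degree $d = \Delta(G)$, using the threshold $T := \sqrt{n\log n/2}$. If some vertex $v$ has $\deg(v) > T$, then since $G$ is triangle-free, $N(v)$ is an independent set and $\alpha(G) \geq \deg(v) > T$, as required. So I may assume $d \leq T$.

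In this regime I would apply the Shearer bound quoted just above, which for $d \geq 2$ gives
\[ \alpha(G) \; \geq \; g(d) \; := \; \frac{n(d\log d - d + 1)}{(d-1)^2}.\]
The degenerate cases $d \leq 1$ are immediate, because then $G$ is a matching together with isolated vertices and $\alpha(G) \geq n/2 > T$ for $n \geq e^{15}$. For $d \geq 2$, I would first verify that $g$ is monotonically decreasing on $[2,\infty)$; after routine differentiation this reduces to the elementary inequality $(d + 1)\log d > 2(d - 1)$ for $d \geq 2$, which is straightforward. Consequently $g(d) \geq g(T)$, and it suffices to prove $g(T) > T$.

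Using $T^2 = n\log n/2$ and $\log T = \tfrac{1}{2}(\log n + \log\log n - \log 2)$, the inequality $n(T \log T - T + 1) > T(T - 1)^2$ becomes, after dividing through by $T$ and collecting terms,
\[ \frac{n}{2}\bigl(\log\log n - \log 2 - 2\bigr) + \frac{n}{T} + 2T - 1 \; > \; 0.\]
The main obstacle is simply to extract this estimate cleanly from the slightly awkward form of Shearer's bound: the hypothesis $n \geq e^{15}$ is exactly what is needed to force $\log\log n \geq \log 15 > 2 + \log 2$, making the first bracket nonnegative; the remaining terms are manifestly positive. The delicate point is that the target inequality $g(T) > T$ is tight up to the small slack provided by the $(n/2)\log\log n$ contribution, which is precisely why the hypothesis takes the specific form $n \geq e^{15}$.
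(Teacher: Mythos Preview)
Your proposal is correct and follows essentially the same approach as the paper's proof: both split on whether the maximum degree exceeds $T = (n\log n/2)^{1/2}$, dispose of $d \leq 1$ trivially, note that Shearer's function is decreasing on $[2,\infty)$, and then evaluate at $d = T$, with the hypothesis $n \geq e^{15}$ entering precisely to ensure $\log\log n \geq 2 + \log 2$. The only cosmetic difference is that the paper inserts the crude intermediate bound $g(m) > n(\log m - 1)/m$ before expanding, whereas you work directly with $g(T) > T$; the resulting inequalities are equivalent.
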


\begin{proof}
Let $m = (\frac{n \log n}{2})^{1/2}$.
If $G$ is a triangle-free $n$-vertex graph of maximum degree $d$,
then $\alpha(G) \geq n/(d + 1)$ and the lemma follows easily for $d \leq 1$.
Suppose $d \geq 2$. Observing that the neighborhood of any
vertex of $G$ is an independent set, we obtain from Shearer's bound,
\[ \alpha(G) \geq \max\Bigl\{d,\frac{n(d\log d - d + 1)}{(d - 1)^2}\Bigr\}.\]
If $d > m$, then we easily recover the bound in the lemma . If $2 \leq d \leq m$,
the second expression is a decreasing function of $d$ and so it is
minimized when $d = m$. In this case we get
\begin{eqnarray*}
\alpha(G) &\geq& \frac{n(m\log m - m + 1)}{(m - 1)^2} \\
&>& \frac{n (\log m - 1)}{m} \\
&\geq& \Bigl(\frac{n \log n}{2}\Bigr)^{1/2} + \frac{(\log \log n - 2 - \log 2) n^{1/2}}{(2\log n)^{1/2}}.
\end{eqnarray*}
Since $n \geq e^{15}$, $\log\log n \geq 2 + \log 2$, and so we have the required bound.
\end{proof}

The second definition we require is that of expansion on arbitrary sets of vertices in a graph:

\bigskip

\begin{definition}
A graph $G$ is {\em $k$-expanding on sets of size at most $T$} if
$|\partial_G X| > k|X|$ for every $X \subset V(G)$ of size at most $T$.
A graph $G$ is {\em weakly $k$-expanding on sets of size at most $T$} if for some $v \in V(G)$, every set $X \neq \{v\}$ of size at most $T$ has $|\partial_G X| > k|X|$.
\end{definition}

\medskip

\begin{lemma}\label{monotone-triangle}
If $G$ is an $n$-vertex triangle-free graph with $\alpha(G) < n/(3k + 1)$ and $k \geq e^{15}$, then $G$ contains a 2-connected subgraph $H$ that is weakly $2$-expanding on sets of size at most $k^2 \log k$ and weakly $3k$-expanding on independent sets.
\end{lemma}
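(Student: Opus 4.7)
The plan is to apply Lemma \ref{indep-expand} with expansion parameter $3k$ to extract a $2$-connected subgraph $F \subseteq G$ that is weakly $3k$-expanding on independent sets, and then to show that the very same $F$, with the \emph{same} distinguished vertex $v$, is automatically weakly $2$-expanding on every nonempty set of size at most $k^2 \log k$. The hypothesis $\alpha(G) < n/(3k+1)$ is precisely what Lemma \ref{indep-expand} needs, and because $F$ inherits triangle-freeness from $G$, every induced subgraph of $F$ carries a large independent set by Lemma \ref{ramsey-triangles}.

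Fix the distinguished vertex $v$ from Lemma \ref{indep-expand} and suppose for contradiction that some nonempty $X \subseteq V(F)$ with $X \neq \{v\}$ and $|X| \leq k^2\log k$ satisfies $|\partial_F X| \leq 2|X|$. I would split into two regimes. If $|X| \leq k$, pick any $u \in X \setminus \{v\}$ and apply the weak $3k$-expansion to the singleton $\{u\}$: this gives $|N_F(u)| > 3k$, and since $N_F(u) \subseteq (X \setminus \{u\}) \cup \partial_F X$ we obtain $|\partial_F X| \geq |N_F(u)| - (|X|-1) > 3k - |X| \geq 2|X|$, a contradiction. If $k < |X| \leq k^2\log k$, then $|X| > k \geq e^{15}$, so Lemma \ref{ramsey-triangles} applied to the triangle-free graph $F[X]$ yields an independent set $I_0 \subseteq X$ with $|I_0| \geq \sqrt{|X|\log|X|/2}$. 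Removing $v$ from $I_0$ if necessary produces an independent set $I \subseteq V(F) \setminus \{v\}$ of size at least $|I_0| - 1$, and since $\partial_F I \subseteq (X \setminus I) \cup \partial_F X$ we have $|\partial_F I| \leq |X| + 2|X| = 3|X|$. Combined with the weak $3k$-expansion $|\partial_F I| > 3k|I|$ this forces $|I| < |X|/k$, contradicting the Shearer-type bound once one verifies $\sqrt{|X|\log|X|/2} - 1 \geq |X|/k$ on the interval $k \leq |X| \leq k^2\log k$.

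The only real obstacle is the tightness of this last calculation. After squaring it amounts to showing $k^2\log|X|/2 \geq |X|$ up to lower-order terms, a concave function of $|X|$ which at the left endpoint $|X| = k$ equals $k^2\log k/2 - k$ and at the right endpoint $|X| = k^2\log k$ equals $k^2\log\log k/2$; both are comfortably positive for $k \geq e^{15}$, and the slack at the right endpoint is what absorbs the $-1$ lost by excluding $v$. This is precisely why the hypothesis $k \geq e^{15}$ and the threshold $k^2\log k$ appear together. Setting $H := F$ then yields the required $2$-connected subgraph, since $2$-connectedness and weak $3k$-expansion on independent sets are already built into $F$ by Lemma \ref{indep-expand}.
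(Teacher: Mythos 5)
Your proof is correct and follows essentially the same route as the paper: extract a $2$-connected, weakly $3k$-expanding subgraph via Lemma \ref{indep-expand}, suppose some $X \neq \{v\}$ violates weak $2$-expansion, feed Lemma \ref{ramsey-triangles} a large independent subset of $X$, and contradict the $3k$-expansion. The only differences are bookkeeping choices (splitting at $|X|=k$ rather than $e^{15}$, applying the Shearer bound to $H[X]$ then deleting $v$ rather than to $H[X\setminus\{v\}]$, and closing with a concavity/endpoint check rather than the paper's intermediate inequality $2x > k^2\log x$).
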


\begin{proof}
Pass to a 2-connected subgraph $H \subset G$ which is weakly $3k$-expanding on independent sets, using Lemma \ref{indep-expand}.
Then for some $v \in V(H)$, every independent set $I \subset V(H) \backslash \{v\}$ has $|\partial_H I| > 3k|I|$.
Let $X \subset V(H)$ satisfy $|\partial_H X| \leq 2|X|$ where $X \neq \{v\}$. If $u \in X \backslash \{v\}$, then
\[ |\partial_H X| \geq |\partial_H \{u\}| - |X| > 3k - |X|.\]
If $|X| < e^{15}$, then since $k \geq e^{15}$ we obtain $|\partial_H X| > 2|X|$. If $|X| \geq e^{15}$, then
$H[X]$ is a triangle-free graph with at least $e^{15}$ vertices. Suppose $|X \backslash \{v\}| = x$. By Lemma \ref{ramsey-triangles}, there is an independent set $I \subset X \backslash \{v\}$ such that
\[ |I| \geq \Bigl(\frac{x \log x}{2}\Bigr)^{1/2} \geq 3,\]
Consequently, $|X| - |I| \leq x - 2$ and therefore
\begin{eqnarray*}
2|X|=2x + 2 &\geq& |\partial_H X| \\
&\geq& |\partial_H I| - |X| + |I| \\
&>& 3k|I| - x + 2 \\
&\geq& 3k \cdot \Bigl(\frac{x \log x}{2}\Bigr)^{1/2} - x + 2.
\end{eqnarray*}
This implies $2x > k^2 \log x$ and therefore $x > k^2 \log k$. This completes the proof.
\end{proof}

\subsection{P\'{o}sa's lemma and long cycles}

A well-known result of P\'{o}sa~\cite{P} shows how to find long paths in graphs which are 2-expanding on sets of size at most $T$: in this case one obtains a path of length at least $3T$. To describe P\'{o}sa's Lemma and the variant we use, we require some notation. If $P = v_1 v_2 \dots v_m$ is a longest path in a graph $G$ and $\{v_1,v_i\}$ is an edge of $G$, consider a path $Q = v_{i-1} \dots v_1 v_i \dots v_m$ of the same length as $P$, obtained by adding edge $\{v_1,v_i\}$ and
deleting edge $\{v_{i-1},v_i\}$ from $P$. We say that $Q$ was obtained from $P$ via an {\em elementary rotation}, which keeps
endpoint $v_m$ fixed. The set of all vertices of $P$ which are endpoints of paths obtained
by repeated elementary rotations from $P$ with fixed endpoint $v_m$ is denoted by $S(P)$. The following variant of
P\'{o}sa's Lemma (see Lemma 2.7 in~\cite{BD}) is our starting point:

\begin{proposition}\label{posa-original}
Let $T \geq 1$, and let $G$ be a graph that is 2-expanding on sets of size at most $T$. Then for any longest path $P \subset G$ there is a cycle $C \subset H$ of length at least $3T$ containing $S(P) \cup \partial S(P)$.
\end{proposition}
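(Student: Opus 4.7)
The plan is to carry out Pósa's rotation--extension argument on a longest path of $G$, fixing one endpoint, and then use a second round of rotations to close the resulting rotated path into a cycle on $V(P)$. Let $P = v_1 v_2 \cdots v_m$ be a longest path in $G$ with $v_m$ fixed, and set $S := S(P)$.

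First I would observe that $\partial_G S \subseteq V(P) \setminus S$. This is immediate from the maximality of $P$: if $v_i \in S$ had a neighbor $u \notin V(P)$, prepending $u$ to a rotated path from $v_i$ to $v_m$ would yield a strictly longer path than $P$, a contradiction.

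Next I would invoke the classical Pósa structural inequality, $|\partial_G S| \leq 2|S|$. The key point is that for every edge $v_i v_j$ with $v_i \in S$ and $v_j \notin S$, the rotation rules force one of the two path-neighbors of $v_j$ on the rotated path ending at $v_i$ to itself lie in $S$, and a careful charging argument then bounds $|\partial_G S|$ by $2|S|$. Combining this with the hypothesis that $G$ is $2$-expanding on sets of size at most $T$, the case $|S| \leq T$ is ruled out (it would contradict $|\partial_G S| > 2|S|$), so $|S| \geq T+1$. Choosing any $S' \subseteq S$ with $|S'| = T$ and applying expansion to $S'$ yields $|\partial_G S'| > 2T$, with $\partial_G S' \subseteq V(P) \setminus S'$ by the same maximality argument. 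Hence
\[
|V(P)| \;\geq\; |S'| + |\partial_G S'| \;>\; T + 2T \;=\; 3T.
\]

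Finally, to produce the cycle, I would run a second rotation. For a chosen $v_i \in S$, rotate the path $P_i$ from $v_i$ to $v_m$ while keeping $v_i$ fixed, generating a set $T_i \subseteq V(P)$ of candidate right-endpoints with $|T_i| \geq T+1$ by the same analysis. The expansion condition, together with the rotation structure along $P_i$, then forces an edge from $v_i$ to some $v_j \in T_i$; the rotated path from $v_i$ to $v_j$ (whose vertex set is still $V(P)$), closed with this edge, is a Hamilton cycle of $V(P)$. This cycle has length $|V(P)| > 3T$ and contains $V(P) \supseteq S \cup \partial_G S$, as required.

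The main obstacle is the closing step: producing the edge from $v_i$ to some vertex of $T_i$ is not automatic from the single-vertex bound $\deg(v_i) \geq 3$ that $2$-expansion provides. The standard resolution is either a more delicate iteration of $2$-expansion on nested subsets of $T_i$ (so that $T_i$ fills up enough of $V(P)$ that $v_i$'s neighborhood must meet it), or appeal to the Bollobás--Thomason strengthening of Pósa's lemma that produces the closing edge directly from the rotation structure. The remaining ingredients --- maximality, the structural inequality, and the size bound on $V(P)$ --- are essentially routine once the rotation terminology is set up.
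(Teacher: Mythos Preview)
Your argument is sound through the bound $|S(P)|>T$ and $|V(P)|>3T$; that part matches the approach in \cite{BD} that the paper quotes. The genuine gap is exactly the one you flag: the closing step. Two-expansion on sets of size at most $T$ does \emph{not} force a Hamilton cycle on $V(P)$, and neither proposed fix rescues it. The ``iterate expansion until $T_i$ fills $V(P)$'' idea cannot work because $|V(P)|$ may be arbitrarily large compared to $T$, while every set to which you may apply the hypothesis has size at most $T$; the endpoint sets $T_i$ stay of order $T$ and need not cover the neighbourhood of any $v_i\in S$. And there is no Bollob\'as--Thomason-type lemma that produces the closing edge from $2$-expansion alone --- those arguments require stronger connectivity or density assumptions.

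The paper's route (following \cite{BD}, and reproduced in the proof of the weak version immediately below the proposition) avoids this entirely by \emph{not} aiming for a Hamilton cycle on $V(P)$. The target is only a cycle through $S(P)\cup\partial S(P)$, and that cycle is built explicitly with no second rotation phase. One first shows that $\partial S(P)\subset S(P)^-\cup S(P)^+$, so $S(P)\cup\partial S(P)$ lies in an initial segment of $P$: if $y$ is the \emph{last} vertex of $\partial S(P)$ along $P$, then every vertex of $S(P)\cup\partial S(P)$ precedes or equals $y$. Now pick any $x\in S(P)$ adjacent to $y$ and any rotated path $Q$ from $x$ to $v_m$. Because every rotation pivots at a neighbour of the current endpoint, and all such neighbours lie in $S(P)\cup\partial S(P)\subset\{v_1,\dots,y\}$, each rotation leaves the $P$-segment from $y$ to $v_m$ untouched; hence the $x$-to-$y$ segment of $Q$ is a rearrangement of $\{v_1,\dots,y\}\supseteq S(P)\cup\partial S(P)$. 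Closing that segment with the edge $\{x,y\}$ gives the required cycle of length $|S(P)\cup\partial S(P)|\geq 3T$. So the missing idea is not a stronger closing lemma but the observation that the rotation endpoints and their boundary already sit in a single initial segment of $P$, which can be closed off directly.
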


We require a slight adjustment of this proposition to accommodate weak 2-expansion: recall that $G$ is weakly 2-expanding on sets of size at most $T$
if for some $v \in V(G)$ and every set $X \subset V(G)$ with $X \neq \{v\}$, $|\partial_G X| > 2|X|$. The proof of the proposition below
is almost identical to the proof of Proposition \ref{posa-original} given in~\cite{BD}.

\begin{proposition}\label{posa}
Let $T \geq 1$, and let $G$ be a graph that is weakly 2-expanding on sets of size at most $T$. Then there exists a longest path $P \subset G$ and a cycle $C \subset H$ of length at least $3T$ containing $S(P) \cup \partial S(P)$.
\end{proposition}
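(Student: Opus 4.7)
The plan is to follow the proof of Proposition~\ref{posa-original} given in~\cite{BD} essentially verbatim, after one preliminary choice: I will select the longest path $P$ and its fixed endpoint so that the exceptional vertex $v$ from the weak expansion hypothesis does not lie in $S(P)$. Once this is arranged, $S(P)$ is a nonempty subset of $V(G)\setminus\{v\}$, so in particular $S(P)\neq\{v\}$, and the weak expansion hypothesis supplies the inequality $|\partial_G S(P)|>2|S(P)|$ that drives the original argument.

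To arrange $v\notin S(P)$, I split into two cases. If some longest path of $G$ has $v$ as an endpoint, I take such a path $P=v_1\dots v_m$ with $v_m=v$; elementary rotations keep $v_m$ fixed, so by definition $v\notin S(P)$. Otherwise no longest path has $v$ as an endpoint, in which case I take any longest path and fix either endpoint. Each $u\in S(P)$ is, by definition, the endpoint of a path obtained from $P$ by successive rotations, and every such rotated path is again a longest path of $G$; hence no element of $S(P)$ can equal $v$, as desired.

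With this choice in hand, the remainder of the proof proceeds word-for-word as in~\cite{BD}: one verifies $S(P)\cup\partial_G S(P)\subseteq V(P)$ from the longest-path property, invokes the expansion inequality $|\partial_G S(P)|>2|S(P)|$ to obtain $|S(P)\cup\partial_G S(P)|>3|S(P)|$, and then constructs a cycle $C\subseteq G$ of length at least $3T$ containing $S(P)\cup\partial_G S(P)$ from the endpoints of the various rotated paths. The one point I would need to check carefully is whether the original argument ever applies the expansion hypothesis to a set that might coincide with $\{v\}$; since in the version in~\cite{BD} expansion is invoked only on $S(P)$ itself, this check is immediate, and no further adjustment is required. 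The main (mild) obstacle is thus isolating the role of $v$ in the rotation orbit, which the case split above handles cleanly.
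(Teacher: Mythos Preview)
Your proposal is correct and follows essentially the same route as the paper: both reduce to the argument of~\cite{BD} after ensuring that $S(P)\neq\{v\}$, and both defer to that reference for the rotation/cycle construction. The only difference is in how that single obstruction is handled---you do a case split to guarantee $v\notin S(P)$, whereas the paper simply picks the rotating endpoint $v_1\neq v$ (always possible, since a nontrivial path has two endpoints) and observes that $|\partial\{v_1\}|>2$ forces $|S(P)|>1$, hence $S(P)\neq\{v\}$; this avoids the case analysis but is otherwise the same idea.
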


\begin{proof}
By definition of weak expansion, for some $v \in V(G)$ and every $X \subset V(G)$ with $X \neq \{v\}$ we have $|\partial_G X| > 2|X|$.
Let $P$ be a longest path in $G$ and let $v_1 \neq v$ be an endpoint of $P$ and $v_m$ the other endpoint.
The crucial step in the proof of Proposition \ref{posa-original} is that $\partial S(P) \subset S(P)^- \cup S(P)^+$, where
$S(P)^-$ is the set of vertices preceding $S(P)$ on $P$ and $S(P)^+$ is the set of vertices succeeding $S(P)$ on $P$.
Since $|S(P)^-| \leq |S(P)|$ and $|S(P)^+| \leq |S(P)|$, we must have $|\partial S(P)| \leq 2|S(P)|$. Since $G$ is weakly 2-expanding
on sets of size at most $T$, we must have $S(P) = \{v\}$ or $|S(P)| > T$. However since $v_1 \neq v$, $|\partial_G \{v_1\}| > 2$ which shows that
$|S(P)| > 1$ and therefore $S(P) \neq \{v\}$. It follows that $|S(P)| > T$.
To construct a cycle containing $S(P) \cup \partial S(P)$,
let $y$ be the last vertex of $\partial S(P)$ on $P$. The segment of $P$ from $v_1$ to $y$ contains all vertices of $S(P) \cup \partial S(P)$,
otherwise any vertex of $S(P)$ after $y$ on $P$ would be distinct from $v_m$ and then the vertex after it on $P$ would be an element
of $\partial S(P)$, contradicting that $y$ is the last vertex of $\partial S(P)$ on $P$. If $x \in S(P)$ is a neighbor of $y$
in $G$, and $Q$ is a path from $x$ to $v_m$ obtained by elementary rotations from $P$, then $Q$ contains the segment of $P$ from $y$ to $v_m$.
Moreover, starting from $x$, $Q$ traverses all vertices of $S(P) \cup \partial S(P)$ before reaching $y$ and then continues along segment of $P$ from $y$ to $v_m$.
So $Q$ together with the edge $\{x,y\}$ forms the required cycle.
Since $|S(P) \cup \partial S(P)| \geq 3T$, this cycle has the required length.
\end{proof}

\medskip

Combining Lemma \ref{monotone-triangle} with Proposition \ref{posa}, we arrive at the following theorem, which
shows that triangle-free graph with large independence ratio contains a very long cycle.

\begin{theorem}\label{long2}
Let $G$ be a triangle-free graph with $\iota(G) > 3k + 1$ where $k \geq e^{15}$. Then $G$ has a cycle of length at least $3k^2 \log k$.
\end{theorem}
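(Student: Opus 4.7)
The plan is to combine the two preparatory results that were just established, namely Lemma \ref{monotone-triangle} and Proposition \ref{posa}, with a minor initial step that translates the independence ratio hypothesis into the hypothesis needed by Lemma \ref{monotone-triangle}.

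First I would unpack the assumption $\iota(G) > 3k+1$. By definition of the supremum in $\iota(G) = \sup_{X \subset V(G)} |X|/\alpha(X)$, there exists some induced subgraph $G[X]$ on $n' = |X|$ vertices with $\alpha(G[X]) < n'/(3k+1)$. Since $G$ is triangle-free, so is $G[X]$; and since $k \geq e^{15}$ we are in a regime where Lemma \ref{monotone-triangle} applies.

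Next I would apply Lemma \ref{monotone-triangle} to $G[X]$. This yields a 2-connected subgraph $H \subset G[X] \subset G$ that is weakly $2$-expanding on sets of size at most $T := k^2 \log k$ (and also weakly $3k$-expanding on independent sets, though that stronger property is not needed here). Finally I would invoke Proposition \ref{posa} on $H$ with this value of $T$: there exists a longest path $P \subset H$ and a cycle $C$ in $H$ of length at least $3T = 3k^2 \log k$. Since $H$ is a subgraph of $G$, this cycle lives in $G$, completing the proof.

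There is no real obstacle here: the theorem is essentially an assembly of the machinery developed in Sections 2.1--2.2. The only point that requires any care is making sure the independence ratio condition on $G$ is transferred cleanly to the induced subgraph $G[X]$ before invoking Lemma \ref{monotone-triangle}, and observing that the weak $2$-expansion threshold $T = k^2\log k$ delivered by that lemma is exactly what is needed to get a cycle of the advertised length from Proposition \ref{posa}.
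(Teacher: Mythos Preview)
Your proposal is correct and follows the paper's proof essentially verbatim: apply Lemma~\ref{monotone-triangle} to get a 2-connected weakly 2-expanding subgraph with threshold $T = k^2\log k$, then invoke Proposition~\ref{posa} to obtain a cycle of length at least $3T$. Your explicit extraction of an induced subgraph $G[X]$ with $\alpha(G[X]) < |X|/(3k+1)$ from the hypothesis $\iota(G) > 3k+1$ is a step the paper leaves implicit, but it is indeed needed since Lemma~\ref{monotone-triangle} is stated for the independence number rather than the independence ratio.
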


\begin{proof}
By Lemma \ref{monotone-triangle}, there is a 2-connected graph $H \subset G$ that is weakly $2$-expanding on sets of size at most $k^2 \log k$. By Proposition \ref{posa}, there is a cycle $C \subset H$ of length at least $3k^2 \log k$ in $G$, completing the proof.
\end{proof}

\subsection{Long odd cycles with chords}

A {\em chord of a cycle} is an edge which joins two
non-consecutive vertices of the cycle.
To prove Theorem \ref{long}, we need to extend Theorem \ref{long2} further to obtain a long odd cycle with a chord.

\begin{proposition}\label{theta}
Let $k \geq e^{15}$ and let $G$ be an $n$-vertex triangle-free graph with $\alpha(G) < n/(3k + 1)$. Then $G$ contains a non-bipartite graph consisting of a cycle of length at least $\frac{1}{2}k^2 \log k$ with at least one chord.
\end{proposition}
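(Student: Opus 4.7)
Plan. The strategy is to combine Proposition~\ref{posa} with a parity analysis to upgrade the long cycle of Theorem~\ref{long2} into one carrying a non-bipartite chord. First I apply Lemma~\ref{monotone-triangle} to pass to a 2-connected triangle-free subgraph $H\subseteq G$ which is weakly $2$-expanding on sets of size at most $k^2\log k$ and weakly $3k$-expanding on independent sets. Exactly as in the proof of Theorem~\ref{long2}, Proposition~\ref{posa} applied to $H$ then produces a longest path $P$, a rotation-endpoint set $S(P)$ of size exceeding $k^2\log k$, and a cycle $C$ of length $\ell\ge 3k^2\log k$ with $V(C)=S(P)\cup\partial_H S(P)$. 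The key structural observation is that for every $x\in S(P)$ the entire neighborhood $N_H(x)$ lies in $V(C)$: any neighbor outside $V(P)$ would extend $P$, while any neighbor inside $V(P)\setminus S(P)$ must lie in $\partial_H S(P)\subseteq S(P)^-\cup S(P)^+\subseteq V(C)$.

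I fix any $x\in S(P)$ different from the single exceptional vertex of the weak expansion, so that $x$ has more than $3k$ neighbors. Exactly two of them are adjacent to $x$ along $C$, leaving at least $3k-2\ge 1$ chords of $C$ incident to $x$. If $\ell$ is odd, any such chord $e$ already makes $C+e$ non-bipartite, and $\ell\ge 3k^2\log k\ge\tfrac12k^2\log k$ completes the proof. If $\ell$ is even, let $V(C)=A\sqcup B$ be its bipartition; a chord $\{x,y\}$ of $C$ makes $C+\{x,y\}$ non-bipartite precisely when $x$ and $y$ lie in the same part. If some $x\in S(P)$ has such a neighbor, we are again done.

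The main obstacle is the remaining configuration, in which every $x\in S(P)$ has all of its $V(C)$-neighbors in the opposite bipartition class, forcing $H[V(C)]$ itself to be bipartite with parts $A$ and $B$. To finish, I would invoke weak $3k$-expansion on the independent set $A$: this gives $|\partial_H A|>3k|A|\ge\tfrac32k\ell$, which vastly exceeds $|B|\le\ell$, so many vertices outside $V(C)$ are adjacent to $A$. Applying triangle-freeness and further expansion to such vertices produces a vertex $w\notin V(C)$ with neighbors $a\in A$ and $b\in B$; the path $a,w,b$ combined with either arc of $C$ from $a$ to $b$ forms an odd cycle of length at least $\ell/2+2\ge\tfrac32k^2\log k$, and a chord of this odd cycle can be supplied either by a further neighbor of $w$ on the chosen arc or by an edge of $H[V(C)]$ inside the arc (the latter guaranteed by yet another application of expansion). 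The factor-of-six slack between $3k^2\log k$ and $\tfrac12k^2\log k$ is precisely what lets us pass from $C$ to this sub-cycle of $C$ together with one ear.
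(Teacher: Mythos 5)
Your opening correctly mirrors the paper: pass to a 2-connected $H$ via Lemma~\ref{monotone-triangle}, apply Proposition~\ref{posa} to get a cycle $C$ of length $\ell\ge 3k^2\log k$ containing $S(P)\cup\partial_H S(P)$, observe that every special vertex has all its $H$-neighbors on $C$, and dispose of the case where $H[V(C)]$ is non-bipartite by taking $C$ together with a chord joining two vertices in the same colour class. The divergence, and the gap, is in the remaining case where $H[V(C)]$ is bipartite with parts $A,B$. You assert that expansion and triangle-freeness produce a single vertex $w\notin V(C)$ with one neighbour in $A$ and one in $B$, and then patch an ear $a,w,b$ onto an arc of $C$. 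Nothing you have proved forces such a $w$ to exist: weak $3k$-expansion applied to $A$ (after discarding the exceptional vertex $v$) only tells you that many vertices of $V(H)\setminus V(C)$ see $A$, and the same for $B$, but the two neighbourhood sets $\partial_H A\setminus V(C)$ and $\partial_H B\setminus V(C)$ could in principle be disjoint. Triangle-freeness is no help, since two neighbours of $w$ inside $A$ (or inside $B$) are already non-adjacent. The later claim that a chord of the resulting odd cycle ``can be supplied \ldots by yet another application of expansion'' is similarly unsupported: $w$ may have exactly two neighbours on $C$, and the chosen arc of $C$ may have no chords at all even though the special vertices on it have many $V(C)$-neighbours (those neighbours could all lie on the opposite arc).

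The paper sidesteps both problems with a more robust splice. Since $H$ is weakly $3k$-expanding on independent sets, $\chi(H)>3k$, so $H-V(C)$ is non-bipartite and contains an odd cycle $D$. By 2-connectivity there are two vertex-disjoint paths from $V(D)$ to $V(C)$ meeting $C$ at $w_1,w_2$; routing through the odd cycle $D$ yields, for the \emph{same} pair $w_1,w_2$, two internally disjoint $w_1w_2$-paths $Q_1,Q_2$ through $H-V(C)$ of opposite parities. This parity freedom is exactly what your single-vertex ear lacks: no matter which side of the bipartition $w_1,w_2$ fall on, one of $Q_1,Q_2$ makes the union with a chosen arc of $C$ odd. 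Finally, one of the two $w_1w_2$-arcs $P_1$ of $C$ carries at least $T/2$ special vertices; if $P_1$ has a chord one is done, and if not the paper shows that some special vertex $w\in V(P_1)$ has two neighbours $x_1,x_2$ on the interior of the other arc $P_2$, from which it builds two long paths $R_1,R_2$ with $R_1\cup R_2=C$, one of which has a chord ($\{w,x_2\}$ or $\{w,x_1\}$) and has length exceeding $T$; attaching $Q_1$ or $Q_2$ finishes. To repair your proof you would essentially have to reproduce this $D$-plus-two-paths argument, since the existence of the convenient vertex $w$ adjacent to both sides of $C$ does not follow from the hypotheses.
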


\begin{proof} By Lemma \ref{monotone-triangle}, $G$ contains a 2-connected subgraph $H$ that is weakly $2$-expanding on sets of size
at most $k^2 \log k$ and weakly $3k$-expanding on independent sets. By Proposition \ref{posa}, there is a cycle $C \subset H$ of
length at least $3T := 3k^2 \log k$ containing $S(P) \cup \partial_H S(P)$ for some longest path $P$ with $|S(P)| \geq T$. The
vertices of $S(P)$ are called {\em special vertices}, and since $\partial_H S(P) \subset V(C)$, all their neighbors are vertices of
$C$. Since $H$ is weakly $3k$-expanding on independent sets, all but at most one vertex of $H$ has degree more than $3k \geq e^{15}$.
In particular, every special vertex has two neighbors on $C$ which are not adjacent to that special vertex on $C$. If $V(C)$ induces
a non-bipartite subgraph of $H$, then we are done -- take $C$ together with an appropriate chord of $C$. Suppose $V(C)$ induces a
bipartite subgraph of $H$. Since $H$ is weakly $3k$-expanding on independent sets, $\chi(H) \geq \iota(H) > 3k \geq e^{15}$. In
particular, $H - V(C)$ is certainly non-bipartite and contains an odd cycle $D$. By 2-connectivity of $H$, there exist two vertex
disjoint paths $Q_1'$ and $Q_2'$ from $V(D)$ to $V(C)$ whose internal vertices are in $H - V(C) - V(D)$. In particular, there exist
$w_1,w_2 \in V(C)$ such that there are both even and odd length paths $Q_1$ and $Q_2$ respectively and $V(Q_1) \cap V(C) =
\{w_1,w_2\} = V(Q_2) \cap V(C)$. Indeed, let $Q_i$ consist of $Q_1' \cup Q_2'$ together with a subpath of $D$ of length congruent to
$|E(Q_1')| + |E(Q_2')| + i$ modulo two, for $i \in \{1,2\}$. Let $P_1$ and $P_2$ denote the two $w_1w_2$-subpaths of $C$, and suppose
$P_1$ contains at least $\frac{1}{2}T$ special vertices. If $P_1$ has a chord, then $P_1 \cup Q_1$ or $P_1 \cup Q_2$ is the required
non-bipartite subgraph. If $P_1$ has no chord, then at least $\frac{1}{2}T$ special vertices on $P_1$ each have at least two
neighbors in $V(P_2) \backslash \{w_1,w_2\}$. Pick a special vertex $w \in V(P_1)$ and neighbors $x_1,x_2 \in N(w) \cap V(P_2)$. We
assume that the order of appearance of vertices on $C$ is $w_1,w,w_2,x_2,x_1$ clockwise. For $a,b \in V(C)$, let $C(a,b)$ denote the
internal vertices of the subpath of $C$ from $a$ to $b$ in the clockwise order where $a$ precedes $b$. Consider the paths $R_1 = C -
C(x_1,w_1) - C(w,w_2) + \{w,x_1\}$ and $R_2 = C - C(w_2,x_2)- C(w_1,w) + \{w,x_2\}$. The path $R_1$ is shown by dotted lines and
arrows in the figure below. Then $R_1 \cup R_2 = C$ and $\{w,x_2\}$ is a chord of $R_1$ and $\{w,x_1\}$ is a chord of $R_2$. One of
these paths has length at least $\frac{1}{2}|V(C)| > T$, and together with $Q_1$ or $Q_2$ forms the required non-bipartite subgraph.
 \end{proof}

\medskip

\SetLabels
\R(0.54*0.35)$w_1$\\
\R(0.72*0.69)$w_2$\\
\R(0.62*0.32)$x_1$\\
\R(0.75*0.53)$x_2$\\
\R(0.30*0.79)$w$\\
\R(0.50*0.86)$C$\\
\R(0.83*0.16)$D$\\
\endSetLabels
\begin{center}
\centerline{\AffixLabels{\includegraphics[width=4in]{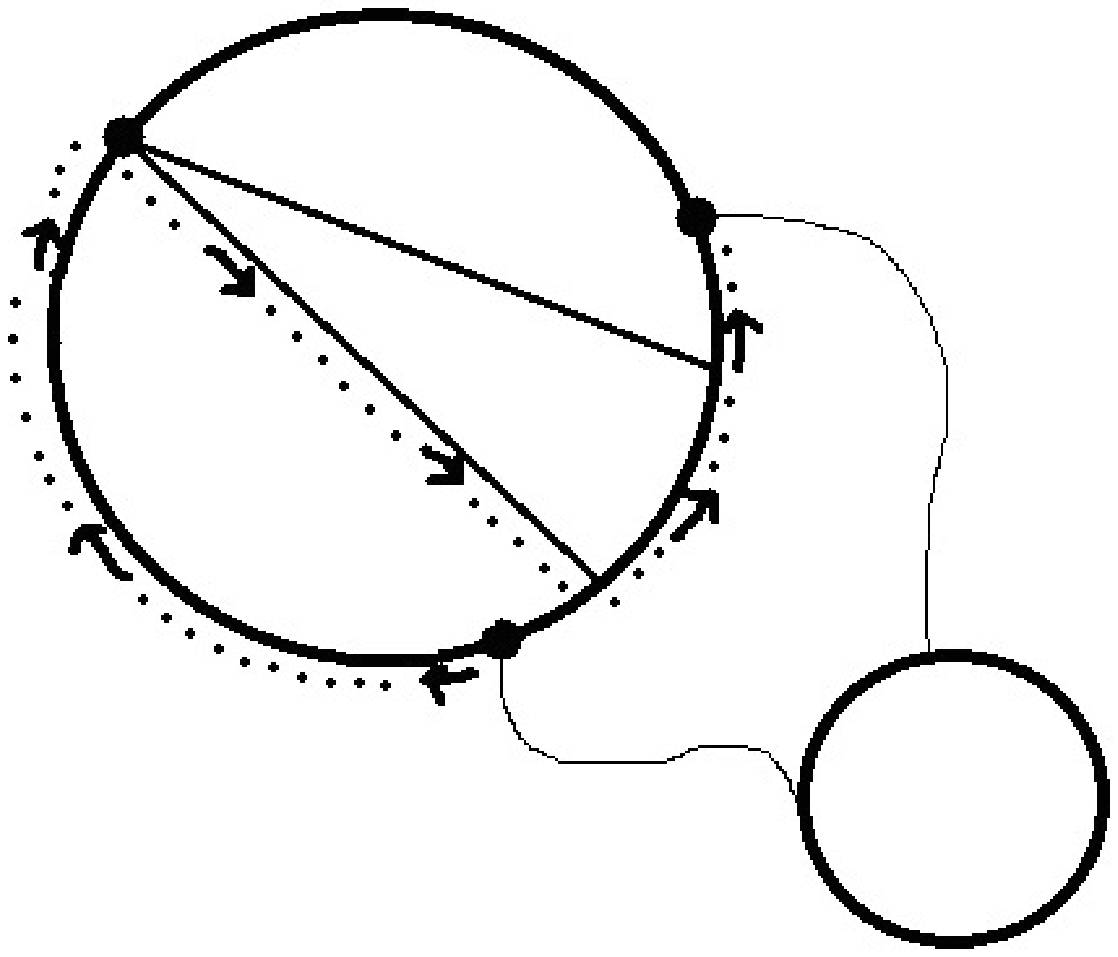}}}
\end{center}

\subsection{Consecutive cycle lengths}

The main ingredient for finding cycles of many consecutive lengths in graphs is the following
proposition~\cite{V}:

\begin{proposition}\label{paths}
Let $F$ be a non-bipartite graph comprising a cycle with a chord, and let $(A,B)$ be a non-trivial partition of $V(F)$.
Then for each $\ell \leq |V(F)| - 1$, there exists a path in $F$ of length $\ell$ with one endpoint in $A$ and the other endpoint in $B$.
\end{proposition}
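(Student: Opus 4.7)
The plan is to consider first the $n$ subpaths of length $\ell$ in the Hamilton cycle $C$ of $F$, then paths of length $\ell$ that use the chord, and finally to invoke the non-bipartiteness of $F$ to finish. Write $C = v_0 v_1 \cdots v_{n-1}$ with chord $\{v_0, v_p\}$ where $2 \leq p \leq n-2$, and let $f : V(F) \to \{0,1\}$ encode the partition $(A,B)$. For each $i \in \mathbb{Z}_n$ the subpath $v_i v_{i+1} \cdots v_{i+\ell}$ of $C$ has length $\ell$; if any such subpath satisfies $f(v_i) \neq f(v_{i+\ell})$ we are done. Otherwise $f$ is periodic on $C$ with period $d := \gcd(\ell,n)$, which is a proper divisor of $n$ since $A$ and $B$ are nonempty.

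Next I would construct chord paths of length $\ell$. The typical family is
\[
\pi_a \;=\; v_a v_{a-1} \cdots v_0 v_p v_{p+1} \cdots v_{p+\ell-1-a},
\]
a valid path for $a$ in a suitable sub-interval of $[0,p-1]$; using $d \mid \ell$, its endpoints have colors $f(a)$ and $f(p-1-a)$ modulo $d$. A symmetric family, traversing each arc of the chord in the opposite direction, yields the constraint $f(a) = f(p+1-a)$ modulo $d$. If any path in either family has endpoints of different colors we are done. Otherwise the two families give $f(a) = f(p-1-a)$ and $f(a) = f(p+1-a)$ modulo $d$, which combine (substitute $a \mapsto p-1-b$ in the second) to $f(b+2) = f(b) \pmod{d}$, so $f$ is $2$-periodic on $C$.

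A non-constant $2$-periodic coloring forces $n$ to be even and $f$ to coincide with the bipartition of $C$. Here non-bipartiteness of $F$ is essential: the chord must join two vertices in the same part of this bipartition, so $p$ is even. If $\ell$ is odd, every subpath of $C$ of length $\ell$ has endpoints of opposite parity, contradicting the first case analysis. If $\ell$ is even, then $(p+\ell-1-a)-a = p+\ell-1-2a$ is odd, so every chord path $\pi_a$ has endpoints of opposite parity, contradicting the second case analysis. A path of length $\ell$ with one endpoint in $A$ and the other in $B$ must therefore exist.

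The chief technical difficulty is to ensure the chord-path families are rich enough to yield the full $2$-periodicity identity modulo $d$. When $p$ or $n-p$ is small compared to $\ell$, the family $\pi_a$ may span only a short interval of residues, and one must add further chord paths (for instance those whose two endpoints lie on the same arc of the chord) to pin down enough constraints. Once this bookkeeping is carried out the conceptual picture is as above: the only obstruction to a direct cycle-based argument is the bipartition structure of $C$, and non-bipartiteness of $F$ breaks it through the chord.
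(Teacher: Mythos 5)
Note first that the paper cites this proposition from reference [V] rather than proving it, so there is no in-paper proof to compare against; I am judging your argument on its own terms. The overall plan---use paths along $C$ to make $f$ periodic with period $d := \gcd(\ell,n)$, use chord paths to upgrade this to $2$-periodicity, then contradict non-bipartiteness---is sound and can be pushed through to a complete proof. But the step you flag as the ``chief technical difficulty'' is a real gap, and it is more than bookkeeping. The family $\pi_a$ is a simple path only for $\max(0,\,p+\ell-n) \leq a \leq \min(p-1,\,\ell-1)$, an interval of exactly $\min(p,\,n-p,\,\ell,\,n-\ell)$ integers. Since $d \mid \ell$ and $d \mid n-\ell$ we always have $\ell, n-\ell \geq d$, but $\min(p,n-p)$ can be smaller than $d$ (try $n=12$, $\ell=6$, $d=6$, $p=2$), and then neither $\pi_a$ nor the reflected $\pi'_a$ ranges over a full residue system modulo $d$. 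The identities $f(a)=f(p-1-a)$ and $f(a)=f(p+1-a)$ are therefore established only on a window of size $\min(p,n-p)$, which is not enough to deduce $2$-periodicity mod $d$.

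The ``same-arc'' chord paths you allude to do rescue the argument, but they give a qualitatively different constraint that needs its own analysis rather than simply enlarging the window. Assume WLOG $p \leq n/2$, so $n-p\geq d$; the remaining case is $p<d\leq n-\ell$, which forces $\ell<n-p$. Then the paths $v_{-a}\cdots v_0\, v_p\cdots v_{p+\ell-1-a}$ with both legs on the long arc exist for every $0\leq a\leq \ell-1$, and they give $f(b)=f(b+p-1)$ for a full residue system modulo $d$, i.e.\ $f$ is $g$-periodic with $g:=\gcd(p-1,d)$. Reducing your two short-window identities modulo $g$ (the window has length $p\geq g+1$ since $g\mid p-1$, so it covers all residues modulo $g$) turns them into $f(a)=f(-a)$ and $f(a)=f(2-a)$, hence $f$ is $2$-periodic modulo $g$. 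If $g$ is odd, $f$ is constant, a contradiction; if $g$ is even then $p-1$ is even so $p$ is odd, while your concluding paragraph forces $p$ even, again a contradiction. So the approach is salvageable, but this case has a genuinely different endgame and must be written out; as it stands the proof is incomplete, because the $2$-periodicity that your final paragraph depends on is precisely what is left unestablished when $\min(p,n-p)<d$.
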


We illustrate the argument which will be used repeatedly in the rest of the paper. Recall that $N_i(v)$ denotes the
set of vertices at distance exactly $i$ from a given vertex $v$ in a graph.

\begin{proposition}\label{consecutive}
Let $G$ be a graph and suppose that $N_i(v)$ contains a non-bipartite graph $F$ comprising a cycle of length $L$ together with a chord.
Then $G$ contains cycles of $L$ consecutive lengths, the shortest of which has length at most $2i + 1$.
\end{proposition}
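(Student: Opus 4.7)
The plan is to combine paths inside $F$, supplied by Proposition~\ref{paths}, with tree-paths back to $v$ coming from a breadth-first search tree rooted at $v$.

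First, root a BFS tree $T$ at $v$. Every $a \in V(F) \subseteq N_i(v)$ lies at depth exactly $i$ in $T$, and comes with a canonical $v$-to-$a$ path $P_a \subset T$ of length $i$. Consider the subtree $T^* = \bigcup_{a\in V(F)} P_a$; since $|V(F)| = L \geq 4$, at least two leaves of $T^*$ sit at depth $i$, so $T^*$ has some internal vertex with at least two children in $T^*$. Let $u$ be the \emph{deepest} such vertex, and put $j = \mathrm{depth}(u) \leq i-1$. Partition $V(F)$ non-trivially as $(A, B)$, where $A$ consists of those $a \in V(F)$ whose path $P_a$ passes through one fixed child of $u$ in $T^*$, and $B = V(F) \setminus A$. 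Then for every $a \in A$ and $b \in B$ the least common ancestor of $a, b$ in $T$ is exactly $u$, so $P_a \cup P_b$ is an $(a,b)$-path of length $2(i-j)$ whose internal vertices all lie strictly above depth $i$.

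Next, I would apply Proposition~\ref{paths} with the bipartition $(A,B)$: for each $\ell \in \{1, 2, \dots, L-1\}$ there is a path $Q_\ell \subseteq F$ of length $\ell$ from some $a_\ell \in A$ to some $b_\ell \in B$. Since $V(F) \subseteq N_i(v)$ while the internal vertices of $P_{a_\ell} \cup P_{b_\ell}$ have depth $<i$, the two paths meet only at their common endpoints $a_\ell, b_\ell$, so their concatenation is a cycle in $G$ of length $2(i-j) + \ell$. Varying $\ell$ produces cycles of $L-1$ consecutive lengths $2(i-j)+1, \dots, 2(i-j)+L-1$, the smallest of which is at most $2i+1$.

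The final step is to recover the $L$th consecutive length from the cycle in $F$ itself, which has length $L$. By a judicious choice of branching vertex $u$ (or, if necessary, by moving to a shallower branching vertex when $F$ would otherwise sit below the interval constructed above), the integer $L$ can be made to fit into the range $[2(i-j), 2(i-j)+L-1]$, and adjoining it to the family gives $L$ consecutive cycle lengths with minimum at most $2i+1$, as required. The main obstacle I anticipate is exactly this book-keeping: verifying that regardless of how the BFS paths $P_a$ distribute themselves within $T$, the depth $j$ of the chosen branching vertex $u$ and the cycle length $L$ are compatible in the sense that $F$ extends the interval built from Proposition~\ref{paths} by one unit rather than overlapping it strictly in the middle. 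I expect this to resolve by a short case analysis on the depth $j$ and on how $V(F)$ branches in $T^*$.
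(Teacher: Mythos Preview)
Your overall plan --- BFS tree, a branching vertex giving a nontrivial partition $(A,B)$ of $V(F)$, then Proposition~\ref{paths} --- is exactly the paper's. There is, however, a real error in your choice of the branching vertex.

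You take $u$ to be the \emph{deepest} vertex of $T^*$ with at least two children and then assert that for every $a\in A$ and $b\in B$ the least common ancestor of $a$ and $b$ is $u$. This fails as soon as $T^*$ has more than one branching vertex: the deepest one is then a proper descendant of the LCA $r$ of $V(F)$, and any leaf $b$ lying in a branch of $r$ that does not contain $u$ has $\mathrm{LCA}(a,b)$ strictly above $u$, so the corresponding tree path is longer than $2(i-j)$. (For a concrete picture: let $r$ have two children, each with two leaf descendants; if $u$ is one of those children then $B$ contains leaves not below $u$.) The paper instead lets $T$ be the \emph{minimal} subtree of the BFS tree whose set of leaves is $V(F)$ and takes the partition at its root, i.e.\ at $r=\mathrm{LCA}(V(F))$. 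This is in fact the \emph{only} branching vertex of $T^*$ that is an ancestor of every leaf, and with that choice every tree path from $A$ to $B$ has the same length $2h$, yielding cycles of lengths $2h+1,\dots,2h+L-1$ with $2h+1\le 2i+1$. Replacing ``deepest'' by ``root of the minimal subtree'' fixes your first two paragraphs completely.

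Your final paragraph --- adjusting $u$, or adjoining the cycle of $F$ itself, to squeeze out an $L$th consecutive length --- cannot be made to work along these lines: as just observed, the only branching vertex for which the argument runs is $r$, so $j$ is forced and there is no ``judicious choice'' available; and there is no reason the fixed value $2(i-j)$ should equal $L$. In fact the paper's own proof also produces only the $L-1$ lengths $2h+1,\dots,2h+L-1$; the discrepancy with the stated ``$L$ consecutive lengths'' is an off-by-one that is immaterial in every application of the proposition, so you need not chase it.
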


\begin{proof}
Consider a breadth-first search tree rooted at $v$, and let $T$ be a minimal subtree
whose set of leaves is $V(F)$. Then $T$ branches at its root, and we let $A$ be the set of leaves of $T$ in one branch, and
$B$ the remaining set of leaves of $T$. Then $A \cup B = V(F)$ and $A,B$ partition $V(F)$. Now we use Proposition \ref{paths}: there exists a path in $F$ of length $\ell$ whose first vertex is in $A$ and whose last
vertex is in $B$ for all $\ell \leq |V(F)| - 1$. If $T$ has height $h$, then we obtain cycles of lengths $2h + 1,2h + 2,\dots,2h + L - 1$
in $H$, as required. Furthermore, since $h \leq i$, the shortest cycle has length $2h + 1 \leq 2i + 1$.
\end{proof}

\subsection{Proof of Theorem \ref{long}}

Theorem \ref{long} states that a triangle-free graph $G$ with $\iota(G) \geq k \geq 3$ contains cycles of $\Omega(k^2 \log k)$
consecutive lengths. It is enough to prove the following quantitative version of this statement: let $G$ be a triangle-free graph with $\iota(G) > 18k + 4$ where $k \geq e^{15}$, then $G$ contains cycles
of at least $k^2 \log k$ consecutive lengths. Pass to a subgraph $F$ of $G$ such that $\alpha(F) < |V(F)|/(18k + 4)$. By Lemma \ref{indep-expand}, $F$ has a subgraph $H$ that is $18k +
3$-expanding on
independent sets. In this subgraph, pick any vertex $v$ and consider $N_i := N_i(v)$ for $i \in \mathbb N$. If for some $i \in \mathbb N$, $\iota(N_i) > 3(2k) + 1$, then Proposition \ref{theta} shows that the
subgraph induced by $N_i(v)$ contains a non-bipartite graph consisting of an odd cycle of length at least $\frac{1}{2}(2k)^2 \log 2k > k^2 \log k$ with at least one chord. In this case, the theorem follows from Proposition
\ref{consecutive}. Otherwise, we have $\iota(N_i) \leq 6k + 1$ for every $i \in \mathbb N$. In that case, $\alpha(N_i) \geq |N_i|/(6k + 1)$ and $N_i$ contains and independent set $I$ of size
at least $|N_i|/(6k + 1)$. Since $H$ is
$18k + 3$-expanding on independent sets, we
conclude
$|\partial I| > 3|N_i|$. In particular, since
\[ \partial I \subset N_i \cup N_{i - 1} \cup N_{i + 1},\]
for all $i \in \mathbb N$ we have
\[ |N_{i + 1}| + |N_{i - 1}| >  3|N_{i}| - |N_i| = 2|N_i|.\]
Then $|N_0| = 1$ and, since $H$ is $18k + 3$-expanding on independent sets, every vertex of $H$ has degree more than $18k + 3$, so $|N_1| > 18k + 3>|N_0|$. We easily obtain $|N_{i + 1}| > |N_i|$ by
induction for all $i \in \mathbb N$, which is clearly impossible since
$H$ is a finite graph so some $N_i$ must be empty. This completes the proof. \hfill{ }\vrule height10pt width5pt depth1pt

\subsection{Hereditary Properties}

In this subsection we prove Theorem \ref{general}. The proof of Theorem \ref{general} is almost identical to the proof of Theorem \ref{long} given over the last three sections, so we merely indicate how to generalize each component of that proof.
Throughout this section, $k \geq 1$ and $f : [1,\infty) \rightarrow [1,\infty)$ is an increasing bijection and $P$ is a hereditary property. For a general hereditary property, the following lemma
generalizes Lemma \ref{monotone-triangle}.

\begin{lemma}\label{monotone}
Let $P$ denote any hereditary property with speed at most $f$ and $G \in P$ where $|V(G)| = n$ and let $k \geq 1$. If $\alpha(G) < n/(6k + 1)$ then $G$ has a 2-connected subgraph that is weakly
$2$-expanding on sets of size at most $f^{-1}(k)$ and weakly $3k$-expanding on independent sets.
\end{lemma}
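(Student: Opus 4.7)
The plan is to follow the proof of Lemma \ref{monotone-triangle} line by line, substituting the abstract speed bound $\alpha(H') \geq |V(H')|/f(|V(H')|)$ (valid for any $H' \in P$) in place of Shearer's inequality (Lemma \ref{ramsey-triangles}). A crucial observation is that the 2-connected subgraph produced by Lemma \ref{indep-expand} is induced in $G$ -- its construction first removes vertex sets from $G$ to produce an induced subgraph, then passes to an endblock, and blocks of any graph are always induced subgraphs. Hence this subgraph belongs to $P$ by heredity, and so does every one of its induced subgraphs.

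Concretely, I apply Lemma \ref{indep-expand} with $6k$ in place of $k$: the hypothesis $\alpha(G) < n/(6k+1)$ yields a 2-connected induced subgraph $H \in P$ that is weakly $6k$-expanding on independent sets, and in particular weakly $3k$-expanding. Let $v$ denote the distinguished vertex. To verify weak $2$-expansion on sets of size at most $f^{-1}(k)$, suppose $X \subset V(H)$ is non-empty with $X \neq \{v\}$ and $|\partial_H X| \leq 2|X|$; the goal is to show $|X| > f^{-1}(k)$. If $|X|=1$, then $X=\{u\}$ with $u \neq v$, so $\{u\}$ is an independent set distinct from $\{v\}$, giving $|\partial_H X| > 6k \geq 6 > 2$, a contradiction. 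For $|X| \geq 2$, the induced subgraph $H[X \setminus \{v\}]$ lies in $P$, so by the speed bound it contains an independent set $I$ with $|I| \geq (|X|-1)/f(|X|-1) \geq (|X|-1)/f(|X|)$. Since $v \notin I$, weak $6k$-expansion on independent sets gives $|\partial_H I| > 6k|I|$. Combining with $\partial_H I \subset \partial_H X \cup (X \setminus I)$ yields
\[ 6k|I| < |\partial_H I| \leq 2|X| + |X| - |I|,\]
so $(6k+1)|I| < 3|X|$, and therefore
\[ f(|X|) > \frac{(6k+1)(|X|-1)}{3|X|} \geq k,\]
the last inequality being equivalent to $(3k+1)|X| \geq 6k+1$, which holds for all $|X| \geq 2$. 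Thus $|X| > f^{-1}(k)$, completing the proof.

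The main obstacle is just bookkeeping with constants: the hypothesis $\alpha(G) < n/(6k+1)$ -- rather than the $n/(3k+1)$ one might anticipate by analogy with Lemma \ref{monotone-triangle} -- is dictated by the final estimate, since one needs $6k$-expansion on independent sets to offset a factor of roughly three that appears when passing from a bound on $|\partial_H I|$ to one on $|X|$. Aside from this, no fine structural information about $P$ is used beyond the trivial but crucial fact that blocks are induced subgraphs, which places the relevant auxiliary subgraphs in $P$.
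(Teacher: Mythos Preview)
Your proof is correct and follows essentially the same route as the paper's: apply Lemma~\ref{indep-expand} with parameter $6k$ to obtain a 2-connected subgraph $H$ that is weakly $6k$-expanding on independent sets, then for any $X\neq\{v\}$ with $|\partial_H X|\le 2|X|$ extract a large independent set $I\subset X\setminus\{v\}$ via the speed bound and use $|\partial_H I|>6k|I|$ together with $\partial_H I\subset\partial_H X\cup(X\setminus I)$ to force $f(|X|)>k$. The only differences from the paper are cosmetic bookkeeping (you keep the $+|I|$ term and bound $f(|X|)$ rather than $f(|X|-1)$), and you are more explicit than the paper about why $H$ is an induced subgraph of $G$ so that the speed bound applies to $H[X\setminus\{v\}]$.
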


\begin{proof}
We repeat the proof of Lemma \ref{monotone-triangle}. Pass to a 2-connected subgraph $H \subset G$ which is weakly $6k$-expanding on independent sets, using Lemma \ref{indep-expand}: for some $v \in V(H)$ and every independent set $I \subset V(H) \backslash \{v\}$ we have $|\partial_H I| > 6k|I|$. Let $X \subset V(H)$ with $X \neq \{v\}$, and suppose $|\partial_H X| \leq 2|X|$. If $X = \{x\}$, then $x \neq v$ and $|\partial_H\{x\}| > 6k > 2|X|$ so we have
$|X| \geq 2$.  Since $P$ is a hereditary property of speed at most $f$, there is an independent set $I \subset X \backslash \{v\}$ such that
$|I| \geq (|X| - 1)/f(|X| - 1)$, since the subgraph of $H$ induced by $X$ is in $P$. Consequently,
\[ 2|X| \geq |\partial_H X| \geq |\partial_H I| - |X| + |I| > 6k|I| - |X|\]
and so $2k|I| < |X|$. So if $|\partial_H X| \leq 2|X|$, then
\[ 2k(|X| - 1)/f(|X| - 1) \leq 2k|I| < |X|\]
from which we get $f(|X| - 1) > 2k(|X| - 1)/|X| \geq k$ since $|X| \geq 2$. It follows that $|X| > f^{-1}(k)$.
\end{proof}

Using this lemma, we obtain the following straightforward generalization of Proposition \ref{theta} for hereditary properties.

\begin{proposition}\label{theta2}
Let $P$ be a hereditary property with speed at most $f$ and $G \in P_n$ and let $k \geq 1$. If $\alpha(G) < n/(6k + 1)$, then $G$ contains a non-bipartite graph consisting of a cycle of length at least
$\frac{1}{2}f^{-1}(k)$ with at least one chord.
\end{proposition}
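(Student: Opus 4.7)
The plan is to follow the proof of Proposition \ref{theta} essentially verbatim, substituting Lemma \ref{monotone} for Lemma \ref{monotone-triangle}. First I apply Lemma \ref{monotone} to $G$, whose hypothesis $\alpha(G)<n/(6k+1)$ is exactly what is given, to pass to a 2-connected subgraph $H\subset G$ that is weakly $2$-expanding on sets of size at most $T:=f^{-1}(k)$ and weakly $3k$-expanding on independent sets. Proposition \ref{posa}, applied to $H$ with this $T$, yields a longest path $P\subset H$ and a cycle $C\subset H$ of length at least $3T$ that contains $S(P)\cup\partial_H S(P)$; since weak $2$-expansion must fail on $S(P)$, one also has $|S(P)|\geq T$.

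From here I carry out the combinatorial surgery of the proof of Proposition \ref{theta}. Every special vertex $w\in S(P)$ has all of its neighbors in $V(C)$, and weak $3k$-expansion on independent sets forces all but one vertex of $H$ to have degree greater than $3k$, so each special vertex has at least two neighbors on $C$ that are not consecutive with it on $C$. If $H[V(C)]$ is non-bipartite, then $C$ together with the chord producing the odd cycle is already a non-bipartite cycle-plus-chord of length $3T\geq\tfrac{1}{2}f^{-1}(k)$, and we are done.

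Otherwise $H[V(C)]$ is bipartite, and the chromatic bound $\chi(H)\geq\iota(H)>3k$ combined with $\chi(H)\leq 2+\chi(H-V(C))$ forces $H-V(C)$ to contain an odd cycle $D$ (for $k\geq 2$; when $k=1$ one has $f^{-1}(k)\leq 2$ and the conclusion is vacuous). Using 2-connectivity of $H$, I find two internally disjoint paths from $V(D)$ to $V(C)$ and splice them through suitable subpaths of $D$ to obtain paths $Q_1,Q_2$ of opposite parity between the same pair $w_1,w_2\in V(C)$, with interiors outside $V(C)$. Letting $P_1,P_2$ be the two $w_1w_2$-subpaths of $C$, one of them, say $P_1$, contains at least $\tfrac{1}{2}T$ special vertices. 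If $P_1$ has a chord, then $P_1$ together with the parity-matching $Q_i$ is the required object. Otherwise, I choose a special vertex $w\in V(P_1)$ with two neighbors $x_1,x_2\in V(P_2)\backslash\{w_1,w_2\}$ and reroute $C$ through $\{w,x_1\}$ and $\{w,x_2\}$ exactly as in Proposition \ref{theta}, producing two paths $R_1,R_2$ that together cover $V(C)$ and each carry one of these edges as a chord; the longer of $R_1,R_2$ has length exceeding $T$, and attaching the appropriate $Q_i$ gives the required cycle of length at least $\tfrac{1}{2}f^{-1}(k)$ together with a chord that destroys bipartiteness.

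The only real obstacle is bookkeeping: verifying that the two conclusions of Lemma \ref{monotone} supply precisely the hypotheses used in Proposition \ref{posa} and in the surgery portion of Proposition \ref{theta}. No new ideas are needed, and the strengthening of the density hypothesis from $n/(3k+1)$ in Lemma \ref{monotone-triangle} to $n/(6k+1)$ in Lemma \ref{monotone} is exactly the adjustment that allows the triangle-free hypothesis to be dropped in this hereditary setting.
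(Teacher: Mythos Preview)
Your proposal is correct and follows exactly the route the paper intends: the paper does not spell out a proof of Proposition~\ref{theta2} but simply asserts that it is the ``straightforward generalization of Proposition~\ref{theta}'' obtained by replacing Lemma~\ref{monotone-triangle} with Lemma~\ref{monotone}, and you have carried out precisely that substitution, including the same P\'osa cycle, the same special-vertex accounting, and the same surgery with the odd cycle $D$ and the reroutings $R_1,R_2$. One small remark: your parenthetical handling of small $k$ (``for $k\geq 2$; when $k=1$ \dots'') leaves the range $1<k<4/3$ formally unaddressed, since there $\chi(H)>3k$ only yields $\chi(H)\ge 4$, which is not enough to force $H-V(C)$ non-bipartite; this is easily patched (e.g.\ by noting that the proof of Lemma~\ref{monotone} actually gives weak $6k$-expansion, so $\chi(H)>6k\ge 6$), and the paper itself glosses over this point entirely.
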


\medskip

{\bf Proof of Theorem \ref{general}.} Let $G \in P$ have $\iota(G) > 18k + 4$ where $k \geq 1$.
By Lemma \ref{indep-expand}, $G$ has a subgraph $H$ that is $18k + 3$-expanding on independent sets. In this subgraph, pick any vertex $v$ and consider $N_i = N_i(v)$ for $i \in \mathbb N$. If for some $i
\in \mathbb N$, $\iota(N_i) > 6k + 1$, then Proposition \ref{theta2} shows that the subgraph induced by $N_i(v)$ contains a non-bipartite graph $F$ consisting of a cycle of length at least
$\frac{1}{2}f^{-1}(k)$
with at least one chord. In this case, the theorem follows from Proposition \ref{consecutive}. Otherwise, we have $\iota(N_i) \leq 6k + 1$ for every $i \in \mathbb N$. In that case, $\alpha(N_i) \geq
|N_i|/(6k + 1)$. Since $H$ is $18k + 3$-expanding on independent sets, we conclude $|N_{i-1} \cup N_i \cup N_{i+1}| > 3|N_i|$, which leads to the contradiction $|N_i| > |N_{i-1}|$ for all $i \in \mathbb
N$, as in Theorem \ref{long}. \hfill{ }\vrule height10pt width5pt depth1pt

\bigskip

\subsection{The property of $H$-free graphs}

To prove Theorem \ref{kskt}, we compute an upper bound for the speed of $P_{s + 1}$, the family of $K_{s + 1}$-free graphs.
Note that if a graph contains $K_{s + 1}$, then it contains a copy of every graph $H$ on $s + 1$ vertices,
and in this case $P_H \subset P_{s + 1}$. We bound the speed using a well-known upper bound on the Ramsey number $r(K_{s + 1},K_{t + 1})$:

\begin{proposition}\label{ramseykskt}
If a graph $G$ has at least ${s + t \choose s}$ vertices, then it contains a clique of order $s + 1$ or an independent
set of order $t + 1$. In particular, $P_{s + 1}$ has speed at most $f_s$
where $f_s(x)= \min\{x,sx^{1 - 1/s}\}$ for $x \in [1,\infty)$.
\end{proposition}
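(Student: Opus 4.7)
The plan has two parts matching the two assertions of the proposition. First I prove the Erd\H{o}s--Szekeres style Ramsey bound $r(K_{s+1}, K_{t+1}) \leq \binom{s+t}{s}$, and then deduce the speed bound from it via a short binomial estimate.

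For the Ramsey bound, I use the standard induction on $s+t$. The base cases $s=0$ and $t=0$ are trivial since $\binom{s+t}{s} = 1$ and any single vertex is simultaneously a clique and an independent set of order $1$. For the inductive step, let $|V(G)| \geq \binom{s+t}{s}$ and fix any vertex $v$. Pascal's identity $\binom{s+t}{s} = \binom{s+t-1}{s-1} + \binom{s+t-1}{s}$ implies that either $v$ has at least $\binom{s+t-1}{s-1}$ neighbors or at least $\binom{s+t-1}{s}$ non-neighbors. In the first case, apply the inductive hypothesis inside $N(v)$ with parameters $(s-1,t)$ to obtain either a clique of order $s$ (which extends via $v$ to a $K_{s+1}$) or an independent set of order $t+1$. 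In the second case, apply induction among the non-neighbors of $v$ with parameters $(s,t-1)$ to obtain either a $K_{s+1}$ or an independent set of order $t$ (which extends via $v$ to order $t+1$).

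For the speed bound, let $G \in P_{s+1}$ be on $n$ vertices and let $X \subset V(G)$ have $|X| = m \geq 1$. Then $G[X]$ is $K_{s+1}$-free, and writing $\alpha := \alpha(X)$, Part~1 gives $m < \binom{s+\alpha}{s}$. I bound the binomial coefficient by the elementary inequality
\[\binom{s+\alpha}{s} = \prod_{i=1}^{s}\frac{\alpha+i}{i} \leq (\alpha+1)^s,\]
noting that each factor is maximized at $i=1$. For $s \geq 2$ and $\alpha \geq 1$ the inequality $\alpha+1 \leq s\alpha$ holds, so $m < (s\alpha)^s$ and hence $|X|/\alpha(X) < sm^{1-1/s} \leq sn^{1-1/s}$. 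Combined with the trivial bound $|X|/\alpha(X) \leq m \leq n$, this yields $\iota(G) \leq \min\{n, sn^{1-1/s}\} = f_s(n)$; the $s=1$ case is immediate since a $K_2$-free graph is edgeless. The only real subtlety is making sure the explicit constant in $sx^{1-1/s}$ comes out right, and the estimate $\binom{s+\alpha}{s} \leq (\alpha+1)^s$ is just tight enough because $\alpha+1 \leq s\alpha$ holds in precisely the range $s \geq 2$, $\alpha \geq 1$ where the bound is non-trivial.
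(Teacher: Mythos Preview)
Your proof is correct and follows essentially the same approach as the paper's: cite (or in your case, prove) the Erd\H{o}s--Szekeres bound $r(K_{s+1},K_{t+1}) \le \binom{s+t}{s}$, then invert a crude binomial estimate of the form $\binom{s+\alpha}{s} \le (s\alpha)^s$ to get $|X|/\alpha(X) < s|X|^{1-1/s}$. Your derivation via $\binom{s+\alpha}{s} \le (\alpha+1)^s$ together with $\alpha+1 \le s\alpha$ for $s\ge 2$, $\alpha\ge 1$ is in fact a bit more careful than the paper's one-line claim $\binom{s+t}{s}\le (st)^s$ (which fails at $s=1$), and your explicit treatment of the $s=1$ case closes that gap cleanly.
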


\begin{proof}
The first statement is a well-known bound on Ramsey numbers $r(K_{s+1},K_{t+1})$. Note that ${s + t \choose s} \leq (st)^s$
for all $s,t \geq 1$. It follows that for any $m$-vertex $K_{s + 1}$-free graph $H$, there is an independent
set of size at least $t + 1$ in $H$ whenever $m \geq (st)^s$. By definition, an $n$-vertex graph $K_{s + 1}$-free graph $G$ has
\[ \iota(G) = \sup_{X \subset V(G)} \frac{|X|}{\alpha(X)} \leq \sup_{1 \leq (st)^s \leq n} \frac{(st)^s}{t + 1} < sn^{1 - 1/s}.\]
We also have $\iota(G) \leq n$. This completes the proof.
\end{proof}

For each fixed $s$, the function $f_s$ in the last proposition is a continuous increasing function on $[1,\infty)$ with
$f_s(1) = 1$, and therefore $f_s : [1,\infty) \rightarrow [1,\infty)$ is an increasing bijection. Applying
Theorem \ref{general}, if $G \in P_{s + 1}$ and $\iota(G) > 18k + 4$, then
$G$ contains cycles of at least $\frac{1}{2}f^{-1}(k)$ consecutive lengths. Since
\[ f^{-1}(k) = \max\{k,(k/s)^{s/(s - 1)}\} \geq (k/s)^{s/(s - 1)}\]
we conclude that a graph $G \in P_{s + 1}$ with $\iota(G) > 18k + 4$ has cycles of at least $\frac{1}{2}(k/s)^{s/(s - 1)}$
consecutive lengths. This completes the proof of Theorem \ref{kskt}. $\Box$

\bigskip

{\bf Remark.} By using better bounds on $r(K_{s + 1},K_{t + 1})$, Theorem \ref{kskt} can be improved by logarithmic factors of $k$, as
we achieved for triangles. However computing bounds on the speed of $P_{s + 1}$ is then more cumbersome, so for simplicity
we avoid these calculations.

\section{Proof of Theorem \ref{lengths}}

We are given a sequence $\sigma_r$ satisfying $\log \sigma_r \leq \sigma_{r - 1}$, and we have to prove that
any graph $G$ with $\iota(G) > \sigma_1 \exp(8\log^* n)$ contains a cycle of length $\sigma_r$ for some $r$. We begin with some notation.
Let $\tau$ denote any infinite increasing subsequence of $\sigma$ with $\tau_1 = \sigma_1$ and let $P_r$ denote the property of all graphs containing no cycle of any length $\sigma_j \leq \tau_r$. Define
\[ \triangle_r := \max \{\sigma_j - \sigma_{j - 1} : \sigma_j \leq \tau_r\}.\]
We define $\tau_0 = 1 = \triangle_1$.
To prove Theorem \ref{lengths}, we first prove the following more general theorem:

\bigskip

\begin{theorem}\label{lengths2} Let $\sigma$ be an infinite increasing sequence of positive integers with $\sigma_1 \geq 3$ and let
$\tau$ be an arbitrary subsequence of $\sigma$ with $\tau_1 = \sigma_1$ and $\triangle_r$ defined as above. Then any
$n$-vertex graph $G \in P_r$ has
\[ \iota(G) < 27^r \sigma_1 \exp\Bigl(\sum_{i = 1}^r \frac{2\log \triangle_i}{\tau_{i - 1}} + \frac{2\log n}{\tau_r}\Bigr).\]
\end{theorem}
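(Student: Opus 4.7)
My plan is induction on $r$. The base case $r=0$ is immediate: $P_0$ is the class of all graphs (no cycle has length at most $\tau_0=1$), and the stated bound becomes $\iota(G)<\sigma_1 n^2$, which holds since $\iota(G)\leq n$. For the inductive step, the inductive hypothesis asserts precisely that $P_{r-1}$ is a hereditary property with speed at most $M_{r-1}(n):=27^{r-1}\sigma_1\exp\bigl(\sum_{i=1}^{r-1}\frac{2\log\triangle_i}{\tau_{i-1}}+\frac{2\log n}{\tau_{r-1}}\bigr)$ (after a harmless modification on a bounded initial segment to make $M_{r-1}$ an increasing bijection on $[1,\infty)$). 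Hence the hereditary-property machinery of Section~2.6 --- in particular Proposition~\ref{theta2} --- applies to graphs in $P_{r-1}$ with speed function $f=M_{r-1}$.

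Given $G\in P_r$ on $n$ vertices, suppose for contradiction that $\iota(G)\geq M_r(n)$. Use Lemma~\ref{indep-expand} to pass to a subgraph $H\subset G$ that is $k$-expanding on independent sets, with $k$ chosen so that the constant lost here accounts for the leading factor $27$ in the ratio $M_r/M_{r-1}$. Pick any $v\in V(H)$ and set $N_i:=N_i^H(v)$; each $N_i$ is an induced subgraph of $G\in P_r\subset P_{r-1}$. Define the threshold $T_r:=6M_{r-1}(2\triangle_r+2)+1$ and the depth cutoff $I_r:=\lfloor(\tau_r-1)/2\rfloor$, and split on the sizes of $\iota(N_i)$ for $i\leq I_r$.

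Case 1: some $i\leq I_r$ has $\iota(N_i)>T_r$. Applying Proposition~\ref{theta2} to $N_i\in P_{r-1}$ (with speed $M_{r-1}$) yields a non-bipartite subgraph of $N_i$ consisting of a cycle of length at least $\frac{1}{2}M_{r-1}^{-1}(M_{r-1}(2\triangle_r+2))=\triangle_r+1$ together with a chord. Proposition~\ref{consecutive} then produces in $G$ at least $\triangle_r$ consecutive cycle lengths whose shortest is at most $2i+1\leq\tau_r$. Since $\triangle_r$ is by definition the largest gap in $\{\sigma_j:\sigma_j\leq\tau_r\}$, this interval of lengths must contain some $\sigma_j\leq\tau_r$, contradicting $G\in P_r$. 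Case 2: every $i\leq I_r$ satisfies $\iota(N_i)\leq T_r$, hence $\alpha(N_i)\geq|N_i|/T_r$. Applied to a maximum independent set in $N_i$, the $k$-expansion of $H$ gives $|N_{i-1}|+|N_i|+|N_{i+1}|\geq k|N_i|/T_r$. If $k/T_r$ is taken comfortably larger than $n^{1/I_r}\approx n^{2/\tau_r}$, a linear-recurrence argument identical in spirit to the closing step of the proof of Theorem~\ref{general} forces $|N_i|$ to grow geometrically, so $|N_{I_r}|>n$, a contradiction.

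Tracing the constants, $T_r$ is comparable to $M_{r-1}(\triangle_r)=27^{r-1}\sigma_1 e^A\triangle_r^{2/\tau_{r-1}}$ with $A=\sum_{j=1}^{r-1}\frac{2\log\triangle_j}{\tau_{j-1}}$, and combining $k\gtrsim T_r\cdot n^{2/\tau_r}$ with $\iota(G)\gtrsim 27k$ reproduces $\iota(G)\geq M_r(n)$, contradicting our assumption. The principal obstacle is bookkeeping rather than new ideas: the factor $27$ per inductive step is a product of the slack in Lemma~\ref{indep-expand}, the constant in Proposition~\ref{theta2}, and the slack in the BFS-growth recurrence, and getting these to multiply to exactly $27^r$ (rather than to a looser constant raised to the $r$) demands the same careful calibration as in the proof of Theorem~\ref{general}, iterated $r$ times.
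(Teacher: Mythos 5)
Your inductive framework—pass to a subgraph expanding on independent sets via Lemma~\ref{indep-expand}, use the speed bound for $P_{r-1}$ in Proposition~\ref{theta2}, feed the resulting chorded non-bipartite cycle into Proposition~\ref{consecutive}, and run the BFS-growth recurrence—is exactly the paper's Claim~1 argument, so the inductive step is on the right track. The problem is the base case, and it is not a bookkeeping issue.

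You take $r=0$ (no forbidden lengths, $P_0$ is all graphs) and note that the trivial bound $\iota(G)\leq n<\sigma_1 n^2$ discharges the claim vacuously. But this base case cannot launch the induction. Tracing your own Case~1 with $r=1$: $\triangle_1=1$, so $T_1=6M_0(4)+1$ and the inverse-speed calculation yields a cycle of length $\triangle_1+1=2$ with a chord, hence (via Proposition~\ref{consecutive}) only about two consecutive cycle lengths whose shortest is at most $\tau_1=\sigma_1$. Two consecutive lengths starting at, say, $3$ give no contradiction with $G\in P_1$ unless $\sigma_1\leq 4$. More generally, the claim that an interval of $\triangle_r+1$ consecutive lengths with shortest term $\leq\tau_r$ ``must contain some $\sigma_j\leq\tau_r$'' is only valid when that interval begins at or beyond $\sigma_1$; the entire initial segment $[3,\sigma_1-1]$ is a $\sigma$-free gap that $\triangle_r$ does not account for. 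The paper handles exactly this by taking $r=1$ as the genuine base case, using the Erd\H{o}s--Faudree--Rousseau--Schelp cycle-complete Ramsey bound $r(C_L,K_t)<27\sigma_1 t^{1+1/\ell}$ to prove directly that $C_{\sigma_1}$-free graphs on $m$ vertices have $\alpha\geq \frac{1}{27\sigma_1}m^{1-\delta_1}$. That Ramsey input is what produces the leading factor $27\sigma_1$ and absorbs the initial segment below $\sigma_1$; it cannot be recovered from the trivial speed $\iota(G)\leq n$ of $P_0$. Your proposal therefore needs a separate Ramsey-theoretic argument at $r=1$; with that inserted, the rest of your sketch aligns with the paper's proof.
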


\medskip

Before proving this theorem, we show how to derive Theorem \ref{lengths} from it:

\bigskip

{\bf Proof of Theorem \ref{lengths}.} In Theorem \ref{lengths}, we are given a sequence $\sigma$ with $\log \sigma_r \leq \sigma_{r - 1}$ for $r \geq 2$.
Let $T(r)$ denote the $r$th element of the sequence
\[ \sigma_1 \quad e^{\sigma_1} \quad e^{e^{\sigma_1}} \quad \cdots\]
Since $\log \sigma_r \leq \sigma_{r - 1}$, there is an element  $\sigma$ between $T(r - 1)$ and $T(r)$ for all $r \geq 2$, which we define to be $\tau_r$.
We now apply Theorem \ref{lengths} with this sequence $\tau = (\tau_r)_{r \in \mathbb N}$ and $\tau_1 = \sigma_1$. Then choose the smallest value of $r$ such that $\tau_r > 2\log n$. Since $T(r)$ is at least a tower of $e$s of height $r$, we note that $r \leq \log^*\!n$. The key fact is that
\[ \sum_{i = 1}^r \frac{2\log \triangle_i}{\tau_{i - 1}} \leq 2r\]
since $\log \triangle_r \leq \tau_{r - 1}$ for all $r \geq 2$. In conclusion, from Theorem \ref{lengths2}, we have
\[ \iota(G) < 27^r \sigma_1 \exp(2r + 1) < \sigma_1 \exp(8\log^*\!n)\]
and this completes the proof. \hfill $\Box$
\bigskip

{\bf Proof of Theorem \ref{lengths2}.} Theorem \ref{lengths2} is a consequence of the following claim:

\medskip

\begin{center}
\parbox{6in}{{\bf Claim 1.} {\it Let $r,m \in \mathbb N$ and $\delta_r = 1/\lceil \tau_r/2\rceil$. Then every $m$-vertex graph $G \in P_r$ has
\[
 \alpha(G) \geq \frac{1}{a_r} m^{1 - \delta_r}.
\]
where $a_r$ is defined by
\[ a_1 = 27\sigma_1  \quad \mbox{ and } \quad a_r = 27a_{r - 1} \triangle_r^{\delta_{r-1}}.\]}
}
\end{center}

\medskip

To see how Theorem \ref{lengths2} follows, take a graph $G \in P_r$ with $n$ vertices; then Claim 1 shows
\[ \iota(G) \leq \sup_{X \subset V(G)} \frac{|X|}{\alpha(X)} \leq \sup_{X \subset V(G)} \frac{|X|}{\frac{1}{a_r} |X|^{1 - \delta_r}}
= a_r n^{\delta_r}\]
since we can apply Claim 1 with $m = |X|$ to the subgraph of $G$ induced by each set $X \subset V(G)$. The linear recurrence $a_r = 27a_{r-1} \triangle_r^{\delta_{r-1}}$ gives
\[ a_r =27^r\sigma_1 \prod_{i=1}^r \triangle_i^{\delta_{i-1}} \leq (27)^r \sigma_1 \exp\Bigl(\sum_{i = 1}^r \frac{2\log \triangle_i}{\tau_{i - 1}}\Bigr)\]
and therefore
\[ \iota(G) \leq a_r n^{\delta_r} \leq 27^r \sigma_1 \exp\Bigl(\sum_{i = 1}^r \frac{2\log \triangle_r}{\tau_{r - 1}} + \frac{2\log n}{\tau_r}\Bigr).\]
We proceed to the proof of Claim 1.

\medskip

{\it Proof of Claim 1.} The claim is true for $P_1$ using early known bounds on cycle-complete Ramsey numbers~\cite{EFRS} (see also~\cite{V}): for $\sigma \in \{2\ell + 1,2\ell + 2\}$,
\[ r(C_L,K_t) < 27\sigma_1 t^{1 + 1/\ell} \]
from which we obtain the required lower bound. Suppose $r > 1$ and that Claim 1 has been proved for every graph in $P_{r-1}$, but that the claim is false for $P_r$.
We first compute an upper bound for the speed of $P_{r - 1}$.

\begin{center}
\parbox{6in}{
{\bf Claim 1.1.} {\it The property $P_{r - 1}$ has speed at most $f$ where
\[ f(x) = \min\{x,a_{r - 1}x^{\delta_{r - 1}}\} \quad \mbox{ for }x \in [1,\infty).\]
}}
\end{center}

{\it Proof of Claim 1.1.} Since Claim 1 holds for $P_{r-1}$, we have that every $p$-vertex graph $F \in P_{r - 1}$ satisfies
\[ \alpha(F) \geq \frac{1}{a_{r-1}} p^{1 - \delta_{r - 1}}\]
This implies
\[ \iota(F) = \sup_{X \subset V(G)} \frac{|X|}{\alpha(X)} \leq a_{r - 1}p^{\delta_{r - 1}}.\]
Therefore $P_{r-1}$ has speed at most $f$ where
\[ f(x) = \min\{x,a_{r - 1}x^{\delta_{r - 1}}\}\]
for $x \geq 1$, as required. \hfill $\Box$

\bigskip

Since by assumption Claim 1 does not hold for $P_r$, for some $m \in \mathbb N$ there exists an $m$-vertex graph $G \in P_r$ such that
\[\alpha(G) < \frac{1}{a_r} m^{1 - \delta_r}.\]
Using Lemma \ref{indep-expand}, pass to an induced subgraph $H$ of $G$ which is $a_r m^{\delta_r}$-expanding on independent sets.
For the rest of the proof of Claim 1, we work in the subgraph $H$ of $G$ to derive a contradiction.

\medskip

\begin{center}
\parbox{6in}{
{\bf Claim 1.2.} {\it For any $v \in V(H)$ and any positive integer $j \leq 1/\delta_r$,
\[
\alpha(N_j(v)) > \frac{3|N_j(v)|}{a_r}.
\]}
}\end{center}

\medskip

{\it Proof of Claim 1.2.} Fix $j \leq 1/\delta_r$ and let $H_j$ be the subgraph of $H$ induced by $N_j(v)$. For convenience, let $h_j = |N_j(v)| = |V(H_j)|$. If $\alpha(H_j) > h_j/9$, then the proof of Claim 1.2 is complete, since $a_r \geq 9 \sigma_1 \geq 27$ for all $r \in \mathbb N$. We may therefore write $\alpha(H_j) = h_j/(6k + 1)$ where $k \geq 1$ is a real number. By Proposition \ref{theta2}, if $f$ is an upper bound for the speed of $P_{r - 1}$, then $H_j$ contains a non-bipartite graph comprising a cycle of length at least $\frac{1}{2}f^{-1}(k)$ with a chord.
It is important to note that $P_r \subset P_{r - 1}$ and therefore $H_j \in P_{r - 1}$.
By Claim 1.1,
\[ f^{-1}(k) = \max\{k,(k/a_{r - 1})^{1/\delta_{r - 1}}\} \geq (k/a_{r - 1})^{1/\delta_{r - 1}}.\]
By Proposition \ref{consecutive}, $H$ contains cycles of at least $\frac{1}{2}f^{-1}(k)$ consecutive lengths,
the shortest of which has length at most $2j + 1 \leq \tau_r$. By definition of $\triangle_r$,
and since $G \in P_r$, we must have $\frac{1}{2}f^{-1}(k) \leq \triangle_r$. 
Recall that $r \geq 2$ and therefore $\delta_{r - 1} \leq 1/2, a_{r-1} \geq 81$.
Rearranging this inequality, and using $\alpha(H_j) = h_j/(6k + 1)$,
we obtain
\[ \alpha(H_j) > \frac{h_j}{9a_{r - 1} \triangle_r^{\delta_{r - 1}}}.\]
By definition of $a_r$, the denominator is $a_r/3$, as required. \hfill $\Box$

\medskip

Since $H$ is $a_r m^{\delta_r}$-expanding on independent sets, we have for any $j \leq 1/\delta_r$ and any maximum independent set $I$ in $H_j$,
\[ |\partial_H N_j(v)| \geq |\partial_H I| > 3m^{\delta_r} \cdot h_j.\]
Since $\partial_H N_j(v) \subset N_j(v) \cup N_{j - 1}(v) \cup N_{j + 1}(v)$, we conclude that
\[ h_{j + 1} + h_{j - 1}  > (3m^{\delta_r} - 1)h_j \geq 2m^{\delta_r}h_j.\]
We also have $h_0 = 1$ and $h_1 > 2 m^{\delta_r}$, since $H$ is $a_r m^{\delta_r}$-expanding on independent sets. Now the recurrence inequality
$h_j + h_{j - 1} > (2c)h_j$ with $h_0 = 1$ and $h_1 > 2c$ has $h_j > c^j$ for all $j$.
With $c = m^{\delta_r}$, we obtain $h_j > m^{j\delta_r}$ for all $j \leq 1/\delta_r$, and in particular we obtain the contradiction
$h_j > m = |V(G)|$ when $j = 1/\delta_r$. This completes the proof of Claim 1, and hence Theorem \ref{lengths2}. \hfill $\Box$

\section{Proof of Theorem \ref{recip}}

We are given an $n$-vertex graph $G$ with $\iota(G) \geq t$ and we have to show
\[ L_{\circ}(G) \geq \frac{1}{2}\log t - 8\log^*\!n.\]
This is clearly true if $t \leq \exp(16\log^*\!n)$ so we assume $t > \exp(16\log^*\!n)$.
We let $s = t/\exp(8\log^*\!n)$, and
consider the disjoint intervals of odd numbers:
\[ S_i = [s^i, s^{i + 1}) \cap (2\mathbb N + 1)\]
for $i \geq 0$. If for some $i$ we have $S_i \subset C(G)$, then
\[ L_{\circ}(G) \geq \frac{1}{2}\log s \geq \frac{1}{2}\log t - 8\log^*\!n \]
and the proof is complete. Otherwise, for each $i$ we pick $\sigma_i \in S_i \backslash C(G)$.
Then we have defined a sequence $\sigma$ with $\sigma_i \leq s \sigma_{i+1}$. Let $\tau$ be a subsequence of $\sigma$
such that $\log \tau_r \leq \tau_{r - 1}$ and such that $\tau_r$ is contained in the interval $S_i$ for which $\sigma_1 T(r) \geq s^{i+1} - 1$ but $\sigma_1 T(r) < s^{i + 2} - 1$.
Here $T(r)$ is a tower of $e$s of height $r$. Note that $\tau_r$ is well defined, since intervals $S_i$ cover all numbers.
We also let $\tau_1 = \sigma_1$. Applying Theorem \ref{lengths2}
with this sequence $\tau$ and choosing $r$ such that $\tau_r > 2\log n$, we have $r \leq \log^*\!n$ and
\[ \iota(G) < 27^r \sigma_1 \exp(2r + 1) < \sigma_1 \exp(8\log^*\!n) < s\exp(8\log^*\!n) < t.\]
This contradiction completes the proof. \hfill $\Box$

\section{Constructions}

In this section, we give constructions which show that the conclusion of Theorem \ref{lengths} does not hold if $\iota(G)$ is replaced with the chromatic number $\chi(G)$.

\bigskip

{\bf Construction.} The existence of triangle-free graphs with arbitrarily large chromatic number was explicitly established by the so-called Mycielski graphs. A survey is given
in~\cite{SS}. These constructions were generalized (see page 213 in~\cite{SS}) to give, for each $k,r \in \mathbb N$ where $r$ is odd, a graph $G_{k,r}$
with chromatic number $k$ and no odd cycle of length at most $r$ and with $|V(G_{k,r})| = 2^{3 - k}(r + 2)^{k - 2}$. We observe that
$|V(G_{k,r})| \leq r^{k} - 1$ for every $r,k \geq 3$, so we let $G'_{k,r}$ consist of $G_{k,r}$ together with enough isolated
vertices so that $|V(G'_{k,r})| = r^k - 1 := n$. Note that $n$ is even. Provided that $\log(n + 1) \leq r$, the increasing sequence $\sigma$ of odd integers in $\{3,5,\dots,r\} \cup \{n + 1,n + 3,\dots\}$
satisfies the
requirements of Theorem \ref{lengths} and yet $G_{k,r}$ has no cycle of length in $\sigma$. For instance, if we take $k = \lfloor r/\log r\rfloor$ and $k \geq 3$, then $n + 1 = r^k \leq e^r$ so $\log(n +
1) \leq r$, and
\[ \chi(G_{k,r}) = k \geq \Big\lfloor \frac{\log (n + 1)}{\log\log(n + 1)} \Big\rfloor \gg \exp(8\log^*\!n) \,.\]
It follows that while $\chi(G_{k,r})$ is substantially larger than the bound $\exp(8\log^*\!n)$ for the independence ratio
in Theorem \ref{lengths}, $G_{k,r}$ has no cycles of length in the sequence $\sigma$.

\section{Concluding remarks}

$\bullet$ We remark that the chromatic number can be arbitrarily large relative to the independence ratio of a graph.
Consider the {\em Kneser graph} $K_{n:r}$
whose vertex set is all subsets of $\{1,2,\dots,n\}$ of size $r$, and whose edges consisting of
pairs of disjoint subsets of $\{1,2,\dots,n\}$ of size $r$. By Lov\'{a}sz's
Theorem~\cite{L}, $\chi(K_{n:r}) = n - 2r + 2$. By the Erd\H{o}s-Ko-Rado Theorem~\cite{EKR},
$\alpha(K_{n:r}) = {n - 1 \choose r - 1}$ and therefore $\iota(K_{n:r}) \geq n/r$. If $n = sr$ where $s > 2$ then
for any set $X \subset K_{n:r}$, we claim $\alpha(X) \leq |X|/s$. To see this, given any collection of $|X|$ subsets
of $\{1,2,\dots,sr\}$ of size $r$, there exists an $i \in \{1,2,\dots,sr\}$ such that
$i$ is contained in at least $r|X|/sr = |X|/s$ of the sets of size $r$. The sets containing $i$ therefore
form an independent set of size at least $|X|/s$ in $K_{n:r}$. Consequently $\iota(K_{sr:r}) \leq s$ and we
conclude $\iota(K_{sr:r}) = s$. Writing $|V(K_{sr:r})| = N$, we obtain $N = {sr \choose r} < (es)^r$ and therefore
\[ \chi(K_{n:r}) > \frac{\iota(G) - 2}{\log \iota(G) + 1} \log N.\]
A good example is the Kneser graph $K_{3r,r}$ with $N = {3r \choose r}$ vertices,
which has independence ratio three and chromatic number $\Theta(\log N)$.

\bigskip

$\bullet$ The Erd\H{o}s-Hajnal conjecture~\cite{E,EH} remains open. A partial step would be to show that if $G$ is a graph with infinite independence
ratio, then $L_{\circ}(G)$ is infinite, whereas in this paper we showed $L_{\circ}(G) > \frac{1}{2}\log \iota(G) - 8\log^*\!n$ when $G$ is an $n$-vertex
graph. Perhaps it is true that $L_{\circ}(G) > \frac{1}{2}\log \chi(G) - O(1)$ for any finite graph $G$, although this is an even stronger
question than the Erd\H{o}s-Hajnal~\cite{EH} conjecture.

\end{document}